\def\today{\ifcase \month \or
   January \or February \or March \or April \or
   May \or June \or July \or August \or
   September \or October \or November \or December \fi
   \space\number\day , \number\year}
  \newcommand\@dotsep{4.5}
  \def\@tocline#1#2#3#4#5#6#7{\relax
     \ifnum #1>\c@tocdepth 
     \else
     \par \addpenalty\@secpenalty\addvspace{#2}%
     \begingroup \hyphenpenalty\@M
     \@ifempty{#4}{%
     \@tempdima\csname r@tocindent\number#1\endcsname\relax
        }{%
         \@tempdima#4\relax
           }%
      \parindent\z@ \leftskip#3\relax \advance\leftskip\@tempdima\relax
      \rightskip\@pnumwidth plus1em \parfillskip-\@pnumwidth
       #5\leavevmode\hskip-\@tempdima #6\relax
       \leaders\hbox{$\m@th
       \mkern \@dotsep mu\hbox{.}\mkern \@dotsep mu$}\hfill
       \hbox to\@pnumwidth{\@tocpagenum{#7}}\par
       \nobreak
        \endgroup
         \fi}
\begin{document}

\makeatletter
\@addtoreset{figure}{section}
\def\thefigure{\thesection.\@arabic\c@figure}
\def\fps@figure{h,t}
\@addtoreset{table}{bsection}

\def\thetable{\thesection.\@arabic\c@table}
\def\fps@table{h, t}
\@addtoreset{equation}{section}
\def\theequation{
\arabic{equation}}
\makeatother

\newcommand{\bfi}{\bfseries\itshape}
\newtheorem{theorem}{Theorem}
\newtheorem{corollary}[theorem]{Corollary}
\newtheorem{criterion}[theorem]{Criterion}
\newtheorem{definition}[theorem]{Definition}
\newtheorem{example}[theorem]{Example}
\newtheorem{lemma}[theorem]{Lemma}
\newtheorem{notation}[theorem]{Notation}
\newtheorem{problem}[theorem]{Problem}
\newtheorem{proposition}[theorem]{Proposition}
\newtheorem{remark}[theorem]{Remark}
\numberwithin{theorem}{section}
\numberwithin{equation}{section}

\newcommand{\todo}[1]{\vspace{5 mm}\par \noindent
\framebox{\begin{minipage}[c]{0.85 \textwidth}
\tt #1 \end{minipage}}\vspace{5 mm}\par}

\renewcommand{\1}{{\bf 1}}

\newcommand{\hotimes}{\widehat\otimes}

\newcommand{\Ad}{{\rm Ad}}
\newcommand{\ad}{{\rm ad}}
\newcommand{\Alt}{{\rm Alt}\,}
\newcommand{\Ci}{{\mathcal C}^\infty}
\newcommand{\comp}{\circ}
\newcommand{\wt}{\widetilde}

\newcommand{\ph}{\text{\bf P}}
\newcommand{\conv}{{\rm conv}}
\newcommand{\de}{{\rm d}}
\newcommand{\ee}{{\rm e}}
\newcommand{\ev}{{\rm ev}}
\newcommand{\fimes}{\mathop{\times}\limits}
\newcommand{\id}{{\rm id}}
\newcommand{\ie}{{\rm i}}
\newcommand{\End}{{\rm End}\,}
\newcommand{\Gr}{{\rm Gr}}
\newcommand{\GL}{{\rm GL}}
\newcommand{\Hilb}{{\bf Hilb}\,}
\newcommand{\Hom}{{\rm Hom}}
\renewcommand{\Im}{{\rm Im}}
\newcommand{\Ker}{{\rm Ker}\,}
\newcommand{\Lie}{\textbf{L}}
\newcommand{\lf}{{\rm l}}
\newcommand{\Loc}{{\rm Loc}\,}
\newcommand{\pr}{{\rm pr}}
\newcommand{\Ran}{{\rm Ran}\,}
\renewcommand{\Re}{{\rm Re}}
\newcommand{\supp}{{\rm supp}\,}

\newcommand{\Cb}{{\mathcal C}_b}
\newcommand{\UCb}{{\mathcal U}{\mathcal C}_b}
\newcommand{\LUCb}{{\mathcal L}{\mathcal U}{\mathcal C}_b}
\newcommand{\RUCb}{{\mathcal R}{\mathcal U}{\mathcal C}_b}

\newcommand{\Tr}{{\rm Tr}\,}
\newcommand{\Tran}{\textbf{Trans}}

\newcommand{\CC}{{\mathbb C}}
\newcommand{\NN}{{\mathbb N}}
\newcommand{\RR}{{\mathbb R}}
\newcommand{\TT}{{\mathbb T}}
\newcommand{\ZZ}{{\mathbb Z}}

\newcommand{\G}{{\rm G}}
\newcommand{\U}{{\rm U}}
\newcommand{\Gl}{{\rm GL}}
\newcommand{\SL}{{\rm SL}}
\newcommand{\SU}{{\rm SU}}
\newcommand{\VB}{{\rm VB}}

\newcommand{\Ac}{{\mathcal A}}
\newcommand{\Bc}{{\mathcal B}}
\newcommand{\Cc}{{\mathcal C}}
\newcommand{\Dc}{{\mathcal D}}
\newcommand{\Ec}{{\mathcal E}}
\newcommand{\Fc}{{\mathcal F}}
\newcommand{\Gc}{{\mathcal G}}
\newcommand{\Hc}{{\mathcal H}}
\newcommand{\Kc}{{\mathcal K}}
\newcommand{\Nc}{{\mathcal N}}
\newcommand{\Oc}{{\mathcal O}}
\newcommand{\Pc}{{\mathcal P}}
\newcommand{\Qc}{{\mathcal Q}}
\newcommand{\Rc}{{\mathcal R}}
\newcommand{\Sc}{{\mathcal S}}
\newcommand{\Tc}{{\mathcal T}}
\newcommand{\Uc}{{\mathcal U}}
\newcommand{\Vc}{{\mathcal V}}
\newcommand{\Wc}{{\mathcal W}}
\newcommand{\Xc}{{\mathcal X}}
\newcommand{\Yc}{{\mathcal Y}}
\newcommand{\Zc}{{\mathcal Z}}
\newcommand{\Ag}{{\mathfrak A}}
\renewcommand{\gg}{{\mathfrak g}}
\newcommand{\hg}{{\mathfrak h}}
\newcommand{\mg}{{\mathfrak m}}
\newcommand{\nng}{{\mathfrak n}}
\newcommand{\pg}{{\mathfrak p}}
\newcommand{\Gg}{{\mathfrak g}}
\newcommand{\Lg}{{\mathfrak L}}
\newcommand{\Mg}{{\mathfrak M}}
\newcommand{\Sg}{{\mathfrak S}}
\newcommand{\Ug}{{\mathfrak u}}
\newcommand{\zg}{{\mathfrak z}}

\markboth{}{}

\makeatletter
\title
[Amenability and representation theory of pro-Lie groups]{Amenability and representation theory\\ of pro-Lie groups}
\author{Daniel Belti\c t\u a}
\address{Institute of Mathematics ``Simion Stoilow'' of the Romanian Academy,
P.O. Box 1-764, Bucharest, Romania}
\email{Daniel.Beltita@imar.ro, beltita@gmail.com}
\author{Amel Zergane}
\address{Higher Institute of Applied Sciences and Technology of Sousse, Mathematical Physics Laboratory, Special Functions and Applications,
City Ibn Khaldoun 4003, Sousse, Tunisia}
\email{amel.zergane@yahoo.fr}
\date{
\today.
}
\keywords{pro-Lie group, coadjoint orbit, amenability}
\subjclass[2010]{Primary 22A25; Secondary 22A10, 22D10, 22D25
}
\makeatother

\begin{abstract}
We develop a semigroup approach to representation theory
for pro-Lie groups satisfying suitable amenability conditions.
As an application of our approach,
we establish a one-to-one correspondence between equivalence classes of unitary irreducible representations and
coadjoint orbits
for a class of pro-Lie groups
including all connected locally compact nilpotent groups and
 arbitrary infinite direct products of nilpotent Lie groups.
The usual $C^*$-algebraic approach to group representation theory
positivey breaks down for infinite direct products
of non-compact locally compact groups, hence the description of their unitary duals
in terms of coadjoint orbits is particularly important whenever it is available,
being the only description known so far. 
\end{abstract}

\maketitle

\tableofcontents

\section{Introduction}

Pro-Lie groups are topological groups that are isomorphic to projective limits of finite-dimensional Lie groups,
and their structure theory and Lie theory were developed to an impressive extent in the monograph \cite{HM07}.
Every connected locally compact group but also every infinite direct product of finite-dimensional Lie groups
belong to that remarkable class of topological groups.
The precise relation between pro-Lie groups and infinite-dimensional Lie groups was also investigated in \cite{HN09}.
Irreducible representation theory of pro-Lie groups was perhaps less developed so far,
so in the present paper we try to fill that gap, following the pattern
of the method of coadjoint orbits from representation theory of nilpotent Lie groups
(see Theorem~\ref{O4} below).
It is maybe unexpected that a substantial part of our investigation can be developed
on the level of semitopological semigroups,
and their amenability properties actually play a key role in our approach.

To describe our results in more detail,
let $G$ be a topological group with Lie algebra $\Lie(G)$ as in \cite{HM07}.
One has the coadjoint action $\Ad_G^*\colon G\times\Lie(G)^*\to\Lie(G)^*$,
with the corresponding set of coadjoint orbits denoted by $\Lie(G)^*/G$.
Also denote by $\widehat{G}$ the set of equivalence classes of unitary irreducible representations
of $G$.
In the spirit of the orbit method, there should be a (at least partially defined)
correspondence $\Psi_G\colon \widehat{G}\to \Lie(G)^*/G$.
Existence of $\Psi_G$ for pretty large classes of pro-Lie groups is established in Theorem~\ref{O4} below, 
and the coadjoint orbits in the image of $\Psi_G$ will be said to satisfy the \emph{integrality condition}.
The study of these orbits beyond the usual setting of Lie groups is a major motivation of the present paper.
And it turns out that the key to reducing the study of $\Psi_G$ from topological groups to Lie groups
is the amenability.
Additional motivation for us is the study of separation properties of unitary representations
in terms of suitable moment sets,
which we plan to extend from Lie groups (see \cite{ASZ11} and \cite{Ze11})
to more general topological groups, using the methods developed here and the moment sets introduced in \cite{BNi15}.

The above integrality terminology claims its origin in algebraic topology (cohomology with integer coefficients),
but for our purposes here it is more relevant to recall that if $G=\TT$ is the unit circle
regarded as a compact Lie group in the usual way, then its coadjoint orbits are the singleton sets $\{t\}$ with $t\in\RR$,
and the integrality condition on $\{t\}$ essentially means $t\in\ZZ$.
This is 
a special instance of the duality theory for locally compact abelian groups.
In fact, every topological abelian group has a Lie algebra, and its coadjoint orbits are the singleton sets $\{\xi\}$
with $\xi\in\Lie(G)^*$.
Some non-locally-compact abelian groups have only one unitary representation, namely the trivial representation
(see for instance \cite{Ba91}),
hence only one orbit satisfying the integrality condition, namely~$\{0\}$.
However, in the locally compact case, the Pontryagin duality clarifies
that there exist many coadjoint orbits that satisfy the integrality condition,
and they are parameterized by the dual group.
It is very interesting that although coadjoint orbits were not explicitly mentioned in connection with the duality theory of locally compact abelian groups, one of the most natural approaches to that theory is based on approximation with abelian Lie groups via projective limits
(see for instance \cite{Mr77}),
and the dual object of an abelian Lie group is directly related to coadjoint orbits.

We investigate the above ideas beyond the case of abelian groups,
in the spirit of the Kirillov theory on nilpotent Lie groups (see \cite{Ki62}).
We wish to study classes~$\Cc$ of topological groups $G$ for which there exists
a map $\Psi_G\colon\widehat{G}\to \Lie(G)^*/G$ that is equal to the Kirillov correspondence
if $G$ is a connected nilpotent Lie group and is functorial:
If $H$ is another topological group in the class $\Cc$ and $p\colon G\to H$ is any continuous surjective homomorphism,
then one has the commutative diagram
$$\begin{CD}
\widehat{H} @>{\widehat{p}}>> \widehat{G} \\
@V{\Psi_H}VV @VV{\Psi_G}V \\
\Lie(H)^*/H @>{\check p}>> \Lie(G)^*/G
\end{CD}$$
where $\widehat{p}$ and $\check p$ are the maps that are canonically associated to $p$. 
This abstract approach is applied here to
representation theory of pro-Lie groups,
thus investigating to what extent the method of coadjoint orbits carries over via projective limits
beyond the framework of Lie groups.
This study is strongly motivated by \cite[Ch. 9, Postscript]{HM07}.

The structure of this paper is as follows.
In Section~\ref{prelims} we collect some terminology, notation, and a couple of basic examples of pro-Lie groups
to which our methods are particularly well suited:
\begin{itemize}
\item Infinite direct products of connected nilpotent Lie groups.
\item Connected nilpotent locally compact groups.
\end{itemize}
In Section~\ref{factor} we develop our semigroup amenability approach to factor representations,
the main technical here being Theorem~\ref{am2a}.
We also establish here in Proposition~\ref{product}, for later use,
amenability of any infinite direct product of amenable topological groups,
a result that we were not able to locate in the existing literature,
although it has been long known that it fails to be true for discrete groups \cite[page 517]{Da57}.
In Section~\ref{main} we establish our main result on
the bijective correspondence between the unitary dual and the integral coadjoint orbits
of pro-Lie groups that satisfy suitable amenability conditions (Theorem~\ref{O4}).
We then specialize it for the aforementioned two classes of groups (Corollaries \ref{O4_cor1} and \ref{O4_cor2})
and we also discuss some specific examples.
As the connected nilpotent locally compact groups are projective limits of nilpotent Lie groups that may not be simply connected,
(see Examples \ref{solenoid} and \ref{sol} below), 
we also need a few results from representation theory of these Lie groups and we collected these
with full proofs in Appendix~\ref{appA}, because again they do not seem to be easily accessible in the earlier literature.

\section{Preliminaries}\label{prelims}

In this section we record some terminology and notation to be used throughout this paper.
We recall that a monoid is a semigroup with unit element,
and a submonoid is any subsemigroup that contains the unit element.
We denote by~$\1$ the unit element of any monoid whose composition law is denoted multiplicatively,
as well as the identity map of any vector space.
If $S$ is any semigroup, then a \emph{normal subsemigroup} of $S$ is any subset $N\subseteq S$
such that $NN\subseteq N$ and $sN=Ns$ for every $s\in S$.
In this case we can define an equivalence relation on $S$ by
$$s_1\sim s_2\iff s_1N=s_2N$$
and the corresponding quotient set $S/N:=\{sN\mid s\in S\}$ has the natural structure of a semigroup
with the operation $sN\cdot tN:=stN$.
It is straightforward to check that this operation on $S/N$ is well defined and the quotient map
$$p_N\colon S\to S/N,\quad s\mapsto sN$$
is a homomorphism of semigroups.
If $S$ is a monoid or a group, then so is $S/N$.

A homomorphism of semigroups $S_1\to S_2$ is said to be normal if its image is a normal subsemigroup of~$S_2$.

Now assume that the semigroup $S$ is equipped with a topology.
We say that $S$ is a {\it right} (respectively, {\it left})
{\it topological semigroup}
if for each $s\in S$ the mapping $S\to S$, $t\mapsto ts$
(respectively, $t\mapsto st$) is continuous.
Moreover $S$ is a {\it semitopological semigroup} if
it is both left and right topological.

\begin{notation}
\normalfont
For every complex Hilbert space $\Hc$ we denote by $\Bc(\Hc)$ its set of all continuous linear operators.
Besides the operator norm topology, we will use the strong operator topology and the strong$^*$ operator topology
on $\Bc(\Hc)$, defined as follows.
If $\{a_j\}_{j\in J}$ is a set in $\Bc(\Hc)$ and $a\in\Bc(\Hc)$, then
$\lim\limits_{j\in J}a_j=a$ in the strong operator topology if $\lim\limits_{j\in J}\Vert a_j\xi-a\xi\Vert=0$ for every
$\xi\in\Hc$,
while  $\lim\limits_{j\in J}a_j=a$ in the strong$^*$ operator topology if we have
both $\lim\limits_{j\in J}a_j=a$ and $\lim\limits_{j\in J}a_j^*=a^*$ in the strong operator topology.
The strong operator topology and the strong$^*$ operator topology coincide on
the unitary group
$$U(\Hc):=\{u\in\Bc(\Hc)\mid u^*u=uu^*=\1\}$$
and turn $U(\Hc)$ into a topological group.

The contraction monoid of $\Hc$ is defined as
$C(\Hc):=\{a\in\Bc(\Hc)\mid\Vert a\Vert\le 1\}$.
It is well known that the operator multiplication map $C(\Hc)\times C(\Hc)\to C(\Hc)$, $(a,b)\mapsto ab$,
is separately continuous with respect to any of the strong operator topology and the strong$^*$ operator topology,
hence any of these two topologies turn $C(\Hc)$ into a semitopological monoid.
\end{notation}

For any monoid $S$, a \emph{filter basis of submonoids} is a set $\Nc\ne\emptyset$ of submonoids
with the property that for every $N_1,N_2\in\Nc$ there exists $N_3\in\Nc$ with $N_3\subseteq N_1\cap N_2$.
If moreover $S$ is equipped with a topology,
then we say that $\Nc$ is \emph{converging to the identity} if for every neighborhood $V$ of $\1\in S$ there exists $N\in\Nc$
with $N\subseteq V$.

\begin{remark}\label{N0}
\normalfont
We say that the topological group $G$ is \emph{complete}
if every Cauchy net in $G$ is convergent, where a Cauchy net is any family $\{g_j\}_{j\in J}$
of elements of $G$, where $J$ is a directed set, such that for every neighborhood $V$ of $\1\in G$
there exists $i\in J$ such that $g_jg_k^{-1}\in V$ for all $j,k\in J$ with $j,k\ge i$.
Every locally compact group is complete by \cite[Rem. 1.31]{HM07}
and then so is every projective limit of Lie groups by \cite[Lemma 1.32(iii)]{HM07}.
We also recall from \cite[Th. 1.33]{HM07} that if
$G$ is a complete topological group with a filter basis $\Nc$ of closed normal subgroups
converging to the identity,
then the map
$$G\to \prod\limits_{N\in\Nc}G/N, \quad g\mapsto(p_N(g))_{N\in\Nc}$$
gives an isomorphism of topological groups $\gamma\colon G\to\varprojlim\limits_{N\in\Nc}G/N$.
 
In this setting, let us establish some further notation to be used throughout this paper, unless otherwise mentioned. 
Assume that
for every $N\in\Nc$ the topological group $G/N$ is a Lie group.
If $N_1,N_2\in \Nc$ with $N_1\subseteq N_2$, then the map
$$p_{N_1,N_2}\colon G/N_1\to G/N_2,\quad gN_1\mapsto gN_2$$
is a well-defined continuous homomorphism of Lie groups,
hence $p_{N_1,N_2}$ is automatically smooth.
Thus the family $\{G/N\}_{N\in \Nc}$ is organized as a projective system of Lie groups,
whose projective limit is isomorphic to the locally compact group $G$
(see \cite[Th. 1.30, 1.33]{HM07}).
We write $G=\varprojlim\limits_{N\in \Nc}G/N$.

In this way the 
topological group
$G$ is \emph{approximated} by the Lie groups $G/N$ with $N\in \Nc$.
\end{remark}

We now briefly indicate two examples of groups whose representation theory
will be studied in Examples \ref{O4_cor1} and \ref{O4_cor2},
respectively.

\begin{example}
\normalfont
Let $\{G_j\}_{j\ge 1}$ be any sequence of Lie groups.
Their Cartesian product $G:=\prod\limits_{j\ge 1}G_j$ is a topological group, which in general is not a Lie group.
For every $k\ge 0$ define
$$N_k:=\prod\limits_{j\ge k+1}G_j
\simeq \underbrace{\{1\}\times\cdots\times\{1\}}_{k\ \text{\rm times}}\times G_{k+1}\times G_{k+2}\times\cdots\subseteq G.$$
Thus every $N_k$ is a closed normal subgroup of $G$ and $G/N_k\simeq G_1\times\cdots\times G_k$ is a Lie group.
Then it is easily checked that the family $\Nc:=\{N_k\mid k\ge 0\}$ satisfies the above conditions in Remark~\ref{N0}, 
hence the topological group $G$ is the projective limit of the sequence of Lie groups $\{G_1\times\cdots\times G_k\}_{k\ge 1}$.
\end{example}

\begin{example}\label{solenoid}
\normalfont
Let $G$ be any connected Lie group with a sequence of discrete central subgroups
$$G\supseteq \Gamma_1\supseteq\Gamma_2\supseteq\cdots$$
with $\bigcap\limits_{k\ge 1}\Gamma_k=\{\1\}$.
Then we have the projective system of Lie groups
$$G/\Gamma_1\to G/\Gamma_2\to\cdots$$
which generalizes the solenoids discussed in \cite{Cz74}.
\end{example}

\begin{remark}\label{ya}
\normalfont
Every \emph{connected} locally compact group $G$ admits a family $\Nc$ as in Remark~\ref{N0},
for instance the family of \emph{all} its compact normal subgroups $N\subseteq G$ for which the locally compact group $G/N$ is a Lie group.
This fact is known as Yamabe's theorem (see for instance \cite[Lemma 2.3]{BNi15}).
\end{remark}

\section{Factor representations of semigroups and projective limits of groups}\label{factor}

In this section we establish some general results in representation theory of semitopological monoids,
with applications to representations of projective limits,
motivated by the following facts.
Let $G$ be any almost connected locally compact group and denote by $\Nc_0(G)$ its family
of compact normal subgroups $N\subseteq G$ for which $G/N$ is a Lie group.
By \cite[Th. 2.1]{Lip72}, $G=\varprojlim\limits_{N\in \Nc_0(G)}G/N$ implies
$\widehat{G}=\varinjlim\limits_{N\in \Nc_0(G)}\widehat{G_{N}}$, and in particular
$$\widehat{G}=\bigcup\limits_{N\in \Nc_0(G)}\widehat{G_{N}}$$
where $\widehat{G_{N}}$ is the image of $\widehat{G/N}$ in $\widehat{G}$ under the injective map
$\widehat{p_{N}}:\widehat{G/N}\rightarrow \widehat{G}$.
The dual space $\widehat{G}$ of equivalence classes of irreducible unitary representations,
is thus realized as an inductive limit of the dual spaces of Lie groups.
Here we generalize the above result in several directions,
inasmuch as we study factor representations of not necessarily compact semitopological monoids,
rather than irreducible representations of locally compact groups.

For any semigroup $S$ we denote by $\ell^\infty(S)$
the commutative unital $C^*$-algebra of all
complex bounded functions on $S$ with the $\sup$ norm $\Vert\cdot\Vert_\infty$.
For each $t\in S$ we define
$$L_t\colon\ell^\infty(S)\to\ell^\infty(S)\quad\text{and}\quad
R_t\colon\ell^\infty(S)\to\ell^\infty(S)$$
by
$(L_tf)(s)=f(ts)$ and $(R_tf)(s)=f(st)$
whenever $s\in S$ and $f\in\ell^\infty(S)$.

If the semigroup $S$ is equipped with a topology then
we denote by $\Cb(S)$ the set of all continuous functions in $\ell^\infty(S)$.
When $S$ is a semitopological semigroup we denote
$\LUCb(S)$ the set of all {\it left uniformly continuous}
bounded complex functions on $S$.
That is,
$f\in\LUCb(S)$ if and only if
$f\in\Cb(S)$ and the mapping
$S\to\Cb(S)$, $s\mapsto L_sf$,
is continuous.
Similarly, 
we define the set $\RUCb(S)$
of all {\it right uniformly continuous} bounded complex functions on $S$
by the above condition with $L_s$ replaced by $R_s$.
Moreover, 
we introduce the set
$\UCb(S):=\LUCb(S)\cap\RUCb(S)$
consisting of all {\it uniformly continuous} bounded
complex functions on $S$.
It is clear that all of the sets $\LUCb(S)$, $\RUCb(S)$ and $\UCb(S)$
are unital $C^*$-subalgebras of~$\Cb(S)$.

Next denote $\Tc:=\LUCb(S)$.
A {\it mean} on $S$
is a linear functional $\mu\colon{\mathcal T}\to\CC$
satisfying $0\le \mu(\varphi)$ if $0\le\varphi\in\Tc$,
and $\Vert\mu\Vert=\mu(\1)=1$, where $\1$ is the constant function equal to~$1$ on~$G$.
It is well known that this implies $\Re\,\mu(\varphi)=\mu(\Re\,\varphi)$ for every $\varphi\in\Tc$.
Now note that ${\mathcal T}$ is invariant under
the operators $L_t$ for each $t\in S$.
We say that a mean $\mu\colon{\mathcal T}\to\CC$ is
{\it left invariant} if $\mu\circ L_t=\mu$ for all $t\in S$.
If this is the case, then $S$ said to be {\it amenable}.
For instance, every compact group is amenable, and a left invariant mean is given by its probability Haar measure,
suitably normalized.
Also, every solvable topological group is amenable (see \cite{Da57}).

\begin{lemma}\label{am0}
Let $\Hc$ be a complex Hilbert space with its contraction semigroup~$C(\Hc)$
endowed with the strong operator topology.
Let $\alpha\colon T\to S$
and $\pi\colon S\to C(\Hc)$ be continuous morphisms of semitopological semigroups,
and
$$(\forall x,y\in\Hc)\quad \psi^\pi_{x,y}\colon S\to\CC,\
\psi^\pi_{x,y}(\cdot):=(x\mid \pi(\cdot)y).$$
Then one has $\psi^\pi_{x,y}\in \RUCb(S)$ and
$\psi^\pi_{x,y}\circ\alpha=\psi^{\pi\circ\alpha}_{x,y}\in \RUCb(T)$ for all $x,y\in\Hc$.
If moreover $\pi$ is continuous with respect to the strong$^*$ operator topology on~$C(\Hc)$,
then one also has $\psi^\pi_{x,y}\in \LUCb(S)$ and
$\psi^\pi_{x,y}\circ\alpha=\psi^{\pi\circ\alpha}_{x,y}\in \LUCb(T)$ for all $x,y\in\Hc$.
\end{lemma}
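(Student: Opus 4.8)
The plan is to unwind the definitions of the function spaces $\RUCb$ and $\LUCb$ and check the stated continuity conditions directly, using the separate continuity of the multiplication in $C(\Hc)$ and the definition of the various topologies. First I would verify that each $\psi^\pi_{x,y}$ is bounded and continuous: boundedness follows from $\Vert\pi(s)\Vert\le1$ and Cauchy--Schwarz, so $\vert\psi^\pi_{x,y}(s)\vert\le\Vert x\Vert\,\Vert y\Vert$, and continuity of $s\mapsto(x\mid\pi(s)y)$ follows because $\pi$ is continuous into $C(\Hc)$ with the strong operator topology, so $\pi(s_j)y\to\pi(s)y$ in norm whenever $s_j\to s$. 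Hence $\psi^\pi_{x,y}\in\Cb(S)$, and the identity $\psi^\pi_{x,y}\circ\alpha=\psi^{\pi\circ\alpha}_{x,y}$ is immediate from the definition, so $\psi^{\pi\circ\alpha}_{x,y}\in\Cb(T)$ as well; it then suffices to prove the membership statements for a general continuous morphism $\pi\colon S\to C(\Hc)$ and deduce the $T$-statements by composing with $\alpha$.

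Next I would establish $\psi^\pi_{x,y}\in\RUCb(S)$. By definition I must show that the map $S\to\Cb(S)$, $s\mapsto R_s\psi^\pi_{x,y}$, is continuous, where $(R_s\psi^\pi_{x,y})(t)=\psi^\pi_{x,y}(ts)=(x\mid\pi(t)\pi(s)y)$. The key observation is that $R_s\psi^\pi_{x,y}=\psi^\pi_{x,\pi(s)y}$. So if $s_j\to s$ in $S$, then $\pi(s_j)y\to\pi(s)y$ in norm (again by strong-operator continuity of $\pi$), and since
$$\Vert R_{s_j}\psi^\pi_{x,y}-R_s\psi^\pi_{x,y}\Vert_\infty
=\sup_{t\in S}\vert(x\mid\pi(t)(\pi(s_j)y-\pi(s)y))\vert
\le\Vert x\Vert\,\Vert\pi(s_j)y-\pi(s)y\Vert,$$
the convergence in the sup norm follows. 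This shows $\psi^\pi_{x,y}\in\RUCb(S)$, and applying the same reasoning to $\pi\circ\alpha\colon T\to C(\Hc)$ (which is again a continuous morphism into the strong operator topology as a composite of continuous maps) gives $\psi^{\pi\circ\alpha}_{x,y}\in\RUCb(T)$.

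For the second part, under the extra hypothesis that $\pi$ is continuous for the strong$^*$ operator topology, I would argue symmetrically for $\LUCb$. Here $(L_s\psi^\pi_{x,y})(t)=\psi^\pi_{x,y}(st)=(x\mid\pi(s)\pi(t)y)=(\pi(s)^*x\mid\pi(t)y)=\psi^\pi_{\pi(s)^*x,y}(t)$, so $L_s\psi^\pi_{x,y}=\psi^\pi_{\pi(s)^*x,y}$. If $s_j\to s$, then strong$^*$ continuity of $\pi$ gives $\pi(s_j)^*x\to\pi(s)^*x$ in norm, whence
$$\Vert L_{s_j}\psi^\pi_{x,y}-L_s\psi^\pi_{x,y}\Vert_\infty
\le\Vert\pi(s_j)^*x-\pi(s)^*x\Vert\,\Vert y\Vert\to0,$$
so $s\mapsto L_s\psi^\pi_{x,y}$ is continuous and $\psi^\pi_{x,y}\in\LUCb(S)$. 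Since $\alpha$ is continuous, $\pi\circ\alpha$ is again strong$^*$ continuous, so the same argument yields $\psi^{\pi\circ\alpha}_{x,y}\in\LUCb(T)$, and combined with the first part we also get membership in $\UCb$ when appropriate. I do not expect any serious obstacle here; the only point requiring a little care is keeping straight which topology (strong vs.\ strong$^*$) is needed for which of the two one-sided uniform-continuity conditions — the asymmetry arises precisely because $R_s$ pushes $\pi(s)$ to the \emph{right} of $\pi(t)$ (needing only $\pi(s)y$ to converge in norm, i.e.\ strong continuity) whereas $L_s$ pushes $\pi(s)$ to the \emph{left}, forcing one to pass to the adjoint $\pi(s)^*$ and hence to invoke strong$^*$ continuity.
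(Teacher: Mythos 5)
Your proposal is correct and follows essentially the same route as the paper's proof: both rest on the identities $R_s\psi^\pi_{x,y}=\psi^\pi_{x,\pi(s)y}$ and $L_s\psi^\pi_{x,y}=\psi^\pi_{\pi(s)^*x,y}$ together with the sup-norm estimates $\Vert\psi^\pi_{x,v}\Vert_\infty\le\Vert x\Vert\,\Vert v\Vert$, which convert strong (respectively strong$^*$) continuity of $\pi$ into right (respectively left) uniform continuity. Your version is in fact slightly more careful in writing the estimate at an arbitrary point $s_0$ rather than only against $\psi^\pi_{x,y}$ itself, but the argument is the same.
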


\begin{proof}
For all $s\in S$ we have  $L_s(\psi^\pi_{x,y})=\psi^\pi_{\pi(s)^*x,y}$
and
$R_s(\psi^\pi_{x,y})=\psi^\pi_{x,\pi(s)y}$
and then
$$\Vert L_s(\psi^\pi_{x,y})-\psi^\pi_{x,y}\Vert_\infty
=\Vert\psi^\pi_{\pi(s)^*x-x,y}\Vert_\infty
\le\Vert \pi(s)^*x-x\Vert\cdot \Vert y\Vert$$
and similarly
$$\Vert R_s(\psi^\pi_{x,y})-\psi^\pi_{x,y}\Vert_\infty\le \Vert x\Vert\cdot\Vert \pi(s)y-y\Vert.$$
Thus, using the continuity of the representation~$\pi$,
we obtain $\psi^\pi_{x,y}\in \RUCb(S)$.
Then the remaining assertions follow directly.
We only recall that continuity of $\pi$  with respect to the strong$^*$ operator topology on~$C(\Hc)$
means that
$$\lim\limits_{s\to s_0}\Vert\pi(s)x-\pi(s_0)x\Vert
=\lim\limits_{s\to s_0}\Vert \pi(s)^*x-\pi(s_0)^*x\Vert=0$$
for all $x\in\Hc$ and $s_0\in S$.
\end{proof}

We use the following terminology.
A \emph{semigroup representation} is any semigroup morphism $\pi\colon S\to\Bc(\Hc)$, where
$\Hc$ is a complex Hilbert space and $\Bc(\Hc)$ is regarded as  multiplicative semigroup.
We then define
$$\pi(S)':=\{a\in\Bc(\Hc)\mid(\forall s\in S)\ \pi(s)a=a\pi(s)\},$$
and we say that $\pi$ is a \emph{factor representation} if
$\{0\}$ and $\Hc$ are the only closed linear subspaces
that are invariant to all operators from $\pi(S)\cup\pi(S)'$.
It is clear that every unitary irreducible representation of a group is a factor representation.
More generally, if for every $a\in\pi(G)$ we have $a^*\in\pi(G)$,
then $\pi(G)''$ is a von Neumann algebra,
and $\pi$ is a factor representation if and only if $\pi(G)''$ is a factor in the sense of the theory of von Neumann algebras,
that is, the center of $\pi(G)''$ is equal to the set of all scalar multiples of the identity operator on~$\Hc$.

\begin{lemma}\label{am0.5}
Let $\pi\colon S\to\Bc(\Hc)$ be any semigroup representation,
and for any normal subsemigroup $N_0\subseteq S$ define
$\Hc_0:=\{x\in\Hc\mid(\forall n\in N_0)\ \pi(n)x=x\}$.
Then $\Hc_0$ is a linear subspace of $\Hc$ that is invariant to any operator from $\pi(S)\cup\pi(S)'$.
\end{lemma}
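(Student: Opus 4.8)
The plan is to show that $\Hc_0$ is a linear subspace (which is immediate, since the defining equations $\pi(n)x=x$ are linear in $x$), and then to verify the invariance under each of the two families of operators separately.

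First I would handle invariance under $\pi(S)'$. Let $a\in\pi(S)'$ and $x\in\Hc_0$. For any $n\in N_0\subseteq S$ we have $\pi(n)a=a\pi(n)$ by definition of $\pi(S)'$, so $\pi(n)(ax)=a(\pi(n)x)=ax$, using $\pi(n)x=x$. Hence $ax\in\Hc_0$, and this is the easy half.

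The slightly more delicate half is invariance under $\pi(S)$ itself. Let $s\in S$ and $x\in\Hc_0$; I must show $\pi(s)x\in\Hc_0$, i.e. $\pi(n)\pi(s)x=\pi(s)x$ for every $n\in N_0$. Here is where the hypothesis that $N_0$ is a \emph{normal} subsemigroup enters: from $sN_0=N_0s$ we get that for the given $n$ there exists $n'\in N_0$ with $ns=sn'$. Applying $\pi$ (a semigroup morphism) gives $\pi(n)\pi(s)=\pi(ns)=\pi(sn')=\pi(s)\pi(n')$, and therefore $\pi(n)\pi(s)x=\pi(s)\pi(n')x=\pi(s)x$, again because $x\in\Hc_0$ kills $\pi(n')$. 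Since $n\in N_0$ was arbitrary, $\pi(s)x\in\Hc_0$.

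I do not anticipate a real obstacle here; the only thing to be careful about is invoking normality in the correct direction ($sN_0=N_0s$ yields $ns=sn'$ with $n'\in N_0$, not $n'=n$), and the fact that we only need a \emph{semigroup} morphism, so no unit or inverses are used. The two displayed identities $\pi(n)\pi(s)=\pi(s)\pi(n')$ and $\pi(n)a=a\pi(n)$ are the whole content, and combining them gives that $\Hc_0$ is invariant to every operator in $\pi(S)\cup\pi(S)'$.
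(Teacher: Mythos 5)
Your proof is correct and follows essentially the same route as the paper's: the commutant case is the trivial commutation $\pi(n)(ax)=a\pi(n)x=ax$, and the $\pi(S)$ case uses normality in exactly the form $ns=sn'$ with $n'\in N_0$ to get $\pi(n)\pi(s)x=\pi(s)\pi(n')x=\pi(s)x$. No gaps; your explicit caution about the direction of the normality condition matches what the paper does implicitly.
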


\begin{proof}
It is clear that if $a\in\Bc(\Hc)$ and $\pi(s)a=a\pi(s)$ for every $s\in S$, then $a\Hc_0\subseteq\Hc_0$.
Moreover, for arbitrary $s\in S$ and $n\in N_0$ we have
$ns=sn_1$ for some $n_1\in N_0$,
by the definition of the fact that $N_0$ is a normal subsemigroup of $S$.
Then for all $x\in\Hc_0$ we obtain $\pi(n)\pi(s)x=\pi(ns)x=\pi(sn_1)x=\pi(s)\pi(n_1)x=\pi(s)x$,
and thus $\pi(s)\Hc_0\subseteq\Hc_0$.
We have thus proved that $\Hc_0$ is invariant both to $\pi(S)$ and to $\pi(S)'$.
\end{proof}

With Lemmas \ref{am0}--\ref{am0.5} at hand,
we now prove the following generalization of \cite[Satz 1]{Ko82}.

\begin{proposition}\label{am1}
Let $S$ be a semitopological monoid,
and $\Hc$ be a complex Hilbert space with its contraction semigroup~$C(\Hc)$
regarded as a topological semigroup with the strong$^*$ operator topology.
Assume that $\pi\colon S\to C(\Hc)$ is a continuous morphism of monoids,
which is also a factor representation.

Then
there exists a neighbourhood $V$ of $\1\in S$
such that for every amenable semitopological semigroup $N$
and every continuous normal morphism of semigroups $\iota\colon N\to S$ with $\iota(N)\subseteq V$ we have
$N\subseteq \Ker(\pi\circ\iota)$.
\end{proposition}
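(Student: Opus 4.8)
The plan is to use the factor property of $\pi$ together with the amenability of $N$ to produce an operator in the center of the von Neumann algebra generated by $\pi(S)$ which must then be scalar, and to arrange the geometry so that this forces $\pi(\iota(N))=\{\1\}$. First I would choose the neighbourhood $V$ of $\1\in S$ so that $\pi(V)$ is contained in a strong$^*$-neighbourhood of the identity on which the contractions $\pi(s)$ are "close to unitary" in the sense that, say, $\Vert\pi(s)x-x\Vert<\varepsilon\Vert x\Vert$ for a fixed small $\varepsilon$ and all $x$ in some convenient dense set (or, more robustly, so that for $x$ in the unit ball $\Re(x\mid\pi(s)x)>1/2$, say). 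The point of this choice is that for $s\in V$ the contraction $\pi(s)$ has no nonzero vector shrunk to zero, i.e.\ $\Ker(\1-\pi(s))$ behaves well; the precise constant will be fixed once we see what the averaging argument needs.

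Next, given an amenable semitopological semigroup $N$ with a left invariant mean $\mu$ on $\LUCb(N)$ and a continuous normal morphism $\iota\colon N\to S$ with $\iota(N)\subseteq V$, I would fix $x,y\in\Hc$ and apply Lemma~\ref{am0} to the composite $\pi\circ\iota\colon N\to C(\Hc)$: since $\pi$ is continuous for the strong$^*$ topology, the coefficient function $\psi^{\pi\circ\iota}_{x,y}=\psi^\pi_{x,y}\circ\iota$ lies in $\LUCb(N)$, so $\mu$ can be applied to it. Averaging $n\mapsto(x\mid\pi(\iota(n))y)$ against $\mu$ defines a sesquilinear form bounded by $\Vert x\Vert\,\Vert y\Vert$, hence a contraction $P\in\Bc(\Hc)$ with $(x\mid Py)=\mu\bigl(n\mapsto(x\mid\pi(\iota(n))y)\bigr)$. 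Left invariance of $\mu$ gives $\pi(\iota(m))P=P$ for every $m\in N$ (apply $\mu\circ L_m$), and a right-averaged version (or using that $\mu$ is also right invariant after symmetrising, or directly averaging the other variable) gives $P\pi(\iota(m))=P$ as well. The normality of $\iota(N)$ in $S$ is then used exactly as in Lemma~\ref{am0.5}: for $s\in S$ and $m\in N$ one has $\iota(m)s=s\iota(m')$ for some $m'\in N$, and since $\mu$ is left invariant this yields $\pi(s)P=P\pi(s)$ for all $s\in S$, i.e.\ $P\in\pi(S)'$. Combined with $\pi(\iota(m))P=P=P\pi(\iota(m))$, the range and kernel of $P$ are invariant under both $\pi(S)$ and $\pi(S)'$, so the factor hypothesis forces $P$ to be $0$ or a scalar; since $P$ fixes vectors (its definition on the diagonal gives $(x\mid Px)=\mu(n\mapsto(x\mid\pi(\iota(n))x))$, which by the choice of $V$ has real part $\ge 1/2\Vert x\Vert^2>0$), $P\neq 0$, hence $P=\1$.

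Finally I would extract the conclusion $N\subseteq\Ker(\pi\circ\iota)$ from $P=\1$. From $\pi(\iota(m))P=P$ and $P=\1$ we get $\pi(\iota(m))=\1$ for every $m\in N$; that is literally $\iota(m)\in\Ker(\pi)$, i.e.\ $N\subseteq\Ker(\pi\circ\iota)$, as desired. I expect the main obstacle to be the two technical points that make the averaging argument work simultaneously on the left and the right: first, justifying that $P$ is genuinely $\pi(\iota(N))$-invariant on \emph{both} sides (this is where one needs either that $\mu$ can be chosen two-sided invariant — which for amenable semitopological semigroups one should argue, or circumvent by averaging a second time — or a careful use of $\RUCb$ versus $\LUCb$ via Lemma~\ref{am0}), and second, pinning down the neighbourhood $V$ precisely enough that the diagonal estimate $\Re\,\mu(n\mapsto(x\mid\pi(\iota(n))x))>0$ is guaranteed, so that $P\ne0$. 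Both are routine once set up correctly, but they are the crux of adapting Köpp's argument \cite[Satz 1]{Ko82} from groups to the present semigroup-with-normal-morphism setting.
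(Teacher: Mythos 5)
Your overall strategy --- averaging the matrix coefficients $n\mapsto(x\mid\pi(\iota(n))y)$ against a left invariant mean and using the choice of $V$ to make the average nondegenerate --- is the right one, and it is the same starting point as the paper's proof. But you route everything through a single averaging operator $P$ with $(x\mid Py)=\mu\bigl(n\mapsto(x\mid\pi(\iota(n))y)\bigr)$, and three of the properties you ascribe to $P$ do not follow from the available hypotheses. First, $P\pi(\iota(m))=P$ requires the mean to be \emph{right} invariant; for semigroups left amenability does not yield a two-sided invariant mean (there is no inversion to pass between the two), and the definition in force only supplies a left invariant mean on $\LUCb(N)$. Second, $P\in\pi(S)'$ does not follow from normality of $\iota(N)$: the relation $\iota(n)s=s\iota(n'_s)$ only gives a correspondence $n\mapsto n'_s$ (not even a well-defined self-map of $N$ when $\iota$ is not injective), and nothing forces $\mu$ to be invariant under it. The pointwise argument of Lemma~\ref{am0.5} works precisely because a vector fixed by \emph{all} of $\pi(\iota(N))$ does not care which $n'_s$ appears; that argument does not transfer to the mean. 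Third, even granting $P\in\pi(S)'$, the factor hypothesis as defined here concerns common invariant closed subspaces of $\pi(S)\cup\pi(S)'$, not the center of a von Neumann algebra ($\pi(S)$ is not assumed self-adjoint), so you cannot conclude that $P$ is scalar. There is also a smaller slip: continuity of $\pi$ into the strong$^*$ topology cannot give $\Re(x\mid\pi(s)x)>1/2$ uniformly over the unit ball on a neighbourhood $V$; one only controls finitely many prescribed vectors.

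The repair, which is the paper's proof, is to apply the mean only to $\varphi_x(n)=(x\mid\pi(\iota(n))x_0)$ for one fixed unit vector $x_0$ chosen so that $\Re(x_0\mid\pi(s)x_0)\ge 1/2$ on $V$. Riesz representation then produces a single vector $x_1$ (your $Px_0$) satisfying $\pi(\iota(n))x_1=x_1$ for all $n$ --- this uses only left invariance, via $\varphi_{\pi(\iota(n))^*x}=L_n\varphi_x$ --- together with $\Re(x_0\mid x_1)\ge 1/2$, so the fixed-point subspace $\Hc_N=\{x\mid \pi(\iota(n))x=x \text{ for all } n\}$ is nonzero. Lemma~\ref{am0.5} shows $\Hc_N$ is invariant under $\pi(S)\cup\pi(S)'$, so factoriality gives $\Hc_N=\Hc$, i.e.\ $N\subseteq\Ker(\pi\circ\iota)$. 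None of the two-sided invariance, the commutant membership, or the scalarity of $P$ is needed, and these are exactly the points where your version breaks down.
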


\begin{proof}
Let $x_0\in\Hc$ be any vector with $\Vert x_0\Vert=1$.
Since $\pi\colon S\to C(\Hc)$ is a morphism of monoids, one has $\pi(\1)=\1$.
Then $(x_0\mid\pi(\1)x_0)=1$, hence there exists a neighbohood~$V$ of $\1\in S$,
depending on $x_0$ and $\pi$ and satisfying
\begin{equation}\label{am1_proof_eq1}
(\forall s\in V)\quad \Re\,(x_0\mid\pi(s)x_0)\ge 1/2.
\end{equation}
Now let $N$ be any amenable semitopological semigroup
and $\iota\colon N\to S$ be any continuous morphism for which $\iota(N)$ is a normal subsemigroup of~$S$
and $\iota(N)\subseteq V$.
We will prove that $(\pi\circ\iota)(n)=\1\in\Bc(\Hc)$ for every $n\in N$.
To this end, we define
the closed linear subspace of $\Hc$,
$$\Hc_N:=\{x\in\Hc\mid \pi(\iota(n))x=x\}$$
and we will prove that $\Hc_N=\Hc$.
It follows by Lemma~\ref{am0.5} applied for $N_0:=\iota(N)$
that $\Hc_N$ is invariant both to $\pi(S)\cup\pi(S)'$.
As $\pi$ is a factor representation, the equality $\Hc_N=\Hc$ will follow
as soon as we will have proved that $\Hc_N\ne\{0\}$.

For arbitrary $x\in\Hc$,
the function $\varphi_x\colon N\to \CC$, $\varphi_x(\cdot):=(x\mid\pi(\iota(\cdot))x_0)$,
satisfies $\varphi_x\in\UCb(N)$ by Lemma~\ref{am0}.
Since the semigroup $N$ is amenable, there exists a linear functional $\mu\colon\LUCb(N)\to\CC$
with $\mu(\1)=1$ and $\mu(L_n(\varphi_x))=\mu(\varphi_x)$ for every $x\in\Hc$ and $n\in N$.
One has $\vert \mu(\varphi_x)\vert\le \sup\limits_N\vert \varphi_x(\cdot)\vert\le\Vert x\Vert$
and $x\mapsto\mu(\varphi_x)$ is a linear functional on $\Hc$,
hence by Riesz' theorem there exists a unique vector $x_1\in\Hc$
with $\mu(\varphi_x)=(x\mid x_1)$ for every $x\in\Hc$.
We then obtain
$$(x\mid\pi(\iota(n))x_1)=(\pi(\iota(n))^*x\mid x_1)
=\mu(\varphi_{\pi(\iota(n))^*x}).$$
It is easily checked that
$\varphi_{\pi(\iota(n))^*x}=L_n(\varphi_x)$
hence, using the property $\mu(L_n(\varphi_x))=\mu(\varphi_x)$ for every $n\in N$,
we obtain by the above equalities
$$(x\mid\pi(\iota(n))x_1)=\mu(\varphi_x)=(x\mid x_1)$$
for all $x\in\Hc$, hence $\pi(\iota(n))x_1=x_1$ for every $n\in N$.
This shows that $x_1\in\Hc_N$.
Note that, by $\iota(N)\subseteq V$ and \eqref{am1_proof_eq1},
we obtain $\Re\,\varphi_{x_0}\ge 1/2$ on $N$, and then, since $\mu\ge 0$ and $\mu(\1)=1$, we have
$\Re(x_0\mid x_1)=\Re(\mu(\varphi_{x_0}))\ge 1/2$.
This shows that $x_1\ne 0$, hence $\Hc_N\ne\{0\}$, and this completes the proof,
as we explained above.
\end{proof}

\begin{theorem}\label{am2a}
Let $S$ be a semitopological monoid with a filter basis $\Nc$ of normal submonoids
converging to the identity.
Assume that every element of $\Nc$ is an amenable semitopological semigroup with its induced topology from~$S$.

Then for every factor representation $\pi\colon G\to C(\Hc)$  which is continous with respect to the strong$^*$ operator topology
there exist $N_0\in\Nc$ satisfying the following equivalent conditions:
\begin{enumerate}[(i)]
\item\label{am2a_item1}
There exists a factor representation $\pi_0\colon G/N_0\to C(\Hc)$
which is continous with respect to the strong$^*$ operator topology and satisfies
$\pi=\pi_0\circ p_{N_0}$.
\item\label{am2a_item2} One has $N\subseteq\Ker\pi$.
\end{enumerate}
\end{theorem}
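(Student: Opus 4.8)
The plan is to deduce the theorem from Proposition~\ref{am1} together with the hypothesis that $\Nc$ converges to the identity; no further analytic input should be needed. First I would observe that $\pi(\1)=\1$ may be assumed: the range of the idempotent $\pi(\1)$ is invariant under $\pi(S)\cup\pi(S)'$, so by the factor property it is either $\Hc$, in which case $\pi(\1)=\1$, or $\{0\}$, in which case $\pi\equiv 0$ and the statement is trivial (any $N_0\in\Nc$ works). Thus $\pi$ is a continuous morphism of monoids, and Proposition~\ref{am1} is applicable to it.

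Next I would dispatch the easier half, namely the equivalence of the two conditions for an arbitrary fixed $N_0\in\Nc$. If the factorization $\pi=\pi_0\circ p_{N_0}$ of (i) exists, then every $n\in N_0$ has $p_{N_0}(n)=\1\in S/N_0$, so $\pi(n)=\pi_0(\1)=\1$ and hence $N_0\subseteq\Ker\pi$, which is (ii). Conversely, assuming $\pi(n)=\1$ for all $n\in N_0$, the equality $gN_0=hN_0$ forces $g=hn$ with $n\in N_0$ (since $\1\in N_0$), whence $\pi(g)=\pi(h)\pi(n)=\pi(h)$; so $\pi$ is constant on $N_0$-cosets, and $\pi_0(gN_0):=\pi(g)$ is a well-defined morphism of monoids with $\pi=\pi_0\circ p_{N_0}$. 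Equipping $S/N_0$ with the quotient topology makes $p_{N_0}$ a topological quotient map and a continuous surjective homomorphism, so $S/N_0$ is a semitopological monoid and $\pi_0$ is continuous for the strong$^*$ operator topology because $\pi$ is (a strong$^*$-open $U$ has $p_{N_0}^{-1}(\pi_0^{-1}(U))=\pi^{-1}(U)$ open). Finally $\pi_0(S/N_0)=\pi(S)$ and $\pi_0(S/N_0)'=\pi(S)'$, so $\pi_0$ and $\pi$ have exactly the same closed invariant subspaces, and $\pi_0$ is again a factor representation; this gives (i).

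The heart of the matter is producing $N_0$. I would apply Proposition~\ref{am1} to the factor representation $\pi$ to get a neighbourhood $V$ of $\1\in S$ such that $N\subseteq\Ker(\pi\circ\iota)$ for every amenable semitopological semigroup $N$ and every continuous normal morphism $\iota\colon N\to S$ with $\iota(N)\subseteq V$. Since $\Nc$ converges to the identity, I would choose $N_0\in\Nc$ with $N_0\subseteq V$ and take for $\iota$ the inclusion $N_0\hookrightarrow S$: it is continuous for the induced topology, its image $N_0$ is a normal subsemigroup of $S$ (as $N_0$ is a normal submonoid), $\iota(N_0)=N_0\subseteq V$, and $N_0$ is amenable by hypothesis. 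Proposition~\ref{am1} then yields $N_0\subseteq\Ker(\pi\circ\iota)$, i.e. $\pi(n)=\1$ for all $n\in N_0$, which is condition (ii); by the previous paragraph condition (i) holds as well, completing the proof.

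The only substantive ingredient is Proposition~\ref{am1}, which is already available; everything else is routine. The points that need a little care are purely topological: that $S/N_0$ with the quotient topology is a semitopological monoid with $p_{N_0}$ a quotient map (so that strong$^*$-continuity of $\pi_0$ is both well posed and inherited from $\pi$, using that $p_{N_0}$ intertwines the one-sided translations of $S$ with those of $S/N_0$), and that $\pi_0$ inherits the factor property verbatim from $\pi$ via the identity $\pi_0(S/N_0)=\pi(S)$. I do not expect any genuine obstacle here.
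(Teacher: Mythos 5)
Your proof is correct and takes essentially the same route as the paper's: obtain the neighbourhood $V$ from Proposition~\ref{am1}, pick $N_0\in\Nc$ with $N_0\subseteq V$ using convergence to the identity, apply Proposition~\ref{am1} to the inclusion $N_0\hookrightarrow S$ to get $N_0\subseteq\Ker\pi$, and then factor $\pi$ through the quotient. The only difference is that you spell out what the paper dismisses as clear (the equivalence of (i) and (ii), the passage of continuity and the factor property to $\pi_0$, and the normalization $\pi(\1)=\1$ needed to invoke Proposition~\ref{am1}).
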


\begin{proof}
It is clear that conditions \eqref{am2a_item1} and \eqref{am2a_item2} from the statement are equivalent.
Now let $V$ be the neighborhood of $\1\in G$ given by Lemma~\ref{am1}.
Since $\Nc$ is converging to the identity, there exists $N_0\in\Nc$ with $N_0\subseteq V$.
Using Lemma~\ref{am1} for the inclusion map $\iota\colon N_0\hookrightarrow G$ and the fact that $N_0$ is amenable,
it follows that $N_0\subseteq\Ker\pi$, hence there exists a unique unitary representation $\pi_0\colon S/N_0\to\Bc(\Hc)$
with $\pi=\pi_0\circ p_N$.
Since $p_N\colon S\to S/N$ is a quotient map and $\pi$ is continuous, it then follows that $\pi_0$ is continuous
and is a factor representation, and this completes the proof.
\end{proof}

\begin{corollary}\label{am2}
Let $G$ be a topological group with a filter basis $\Nc$ of closed normal subgroups
converging to the identity.
Assume that every element of $\Nc$ is an amenable topological group with its induced topology from~$G$.

Then for every factor representation $\pi\colon G\to U(\Hc)$
which is continuous with respect to the strong operator topology
there exist $N_0\in\Nc$ and a factor representation $\pi_0\colon G/N_0\to\Bc(\Hc)$ with
$\pi=\pi_0\circ p_{N_0}$.
\end{corollary}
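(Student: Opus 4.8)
The plan is to reduce Corollary~\ref{am2} to Theorem~\ref{am2a} by a routine passage from unitary group representations to factor representations of the underlying semitopological monoid. First I would observe that a unitary representation $\pi\colon G\to U(\Hc)$ that is continuous for the strong operator topology is automatically continuous for the strong$^*$ operator topology, since on $U(\Hc)$ these two topologies coincide (as recalled in the Notation block). Composing with the inclusion $U(\Hc)\hookrightarrow C(\Hc)$, we may therefore regard $\pi$ as a continuous morphism of semitopological monoids $G\to C(\Hc)$ that happens to take values in the unitaries; and $\pi$ is a factor representation in the semigroup sense because $\pi(G)\cup\pi(G)'$ has the same invariant closed subspaces whether we view $\pi$ as a group or as a semigroup representation (indeed $\pi(G)'$ is literally the same set).

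Next I would check that the hypotheses of Theorem~\ref{am2a} are met: $G$ is a semitopological monoid (a topological group is in particular a semitopological monoid), $\Nc$ is a filter basis of normal submonoids converging to the identity (each $N\in\Nc$ is a closed normal subgroup, hence a submonoid, and a subset $N$ with $sN=Ns$ for all $s$ is exactly a normal subsemigroup; ``converging to the identity'' is the same notion for groups and for monoids), and each $N\in\Nc$ is an amenable semitopological semigroup in its induced topology because it is an amenable topological group in its induced topology. Applying Theorem~\ref{am2a} yields $N_0\in\Nc$ and a factor representation $\pi_0\colon G/N_0\to C(\Hc)$, continuous for the strong$^*$ operator topology, with $\pi=\pi_0\circ p_{N_0}$.

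Finally I would promote $\pi_0$ to the statement as written. Since $\pi=\pi_0\circ p_{N_0}$ takes values in $U(\Hc)$ and $p_{N_0}\colon G\to G/N_0$ is surjective, $\pi_0$ also takes values in $U(\Hc)$, so in particular $\pi_0$ is a $\Bc(\Hc)$-valued factor representation with $\pi=\pi_0\circ p_{N_0}$, which is exactly what Corollary~\ref{am2} asserts. (One may additionally note that $\pi_0$ is a genuine continuous unitary representation of the topological group $G/N_0$: strong$^*$ continuity on $C(\Hc)$ restricts to strong continuity on $U(\Hc)$, and $G/N_0$ is a group.) No step here is a real obstacle; the only point requiring a moment's care is the identification of ``normal subgroup'' with ``normal subsemigroup'' and the matching of the two notions of amenability and of convergence to the identity, so that Theorem~\ref{am2a} genuinely applies — all the analytic content has already been absorbed into Proposition~\ref{am1} and Theorem~\ref{am2a}.
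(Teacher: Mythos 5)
Your proposal is correct and follows exactly the same route as the paper: the paper's proof of Corollary~\ref{am2} is a one-line reduction to Theorem~\ref{am2a} via the observation that a unitary representation continuous for the strong operator topology is automatically continuous for the strong$^*$ operator topology. Your additional verifications (normal subgroup $=$ normal subsemigroup, matching of the amenability and convergence notions, and that $\pi_0$ is unitary-valued) are sound and merely make explicit what the paper leaves implicit.
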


\begin{proof}
Use Theorem~\ref{am2a} and the fact if a unitary representation $\pi\colon G\to U(\Hc)$
is continuous with respect to the strong operator topology,
then it is actually continuous with respect to the strong$^*$ operator topology.
\end{proof}

\begin{remark}\label{am_ref}
\normalfont
Corollary~\ref{am2} is a generalization of several results from the earlier literature on unitary representations;
see for instance \cite[Th. 2.1]{Lip72}, \cite[Prop. 2.2]{Mo72}, \cite{Mi75}, \cite[Cor. to Lemma 1]{Mag81}, \cite[Folg. 3--4]{Ko82},
where the elements of the filter basis of normal submonoids $\Nc$
are amenable because they are either compact or solvable subgroups.
\end{remark}

\begin{corollary}\label{pro0}
Let $G$ be any locally compact group with a filter basis $\Nc$ of compact normal subgroups converging to the identity.
For every
irreducible representation $\pi\colon G\to U(\Hc)$
there exist $N_0\in\Nc$ and an irreducible representation $\pi_0\colon G/N_0\to U(\Hc)$  with $\pi=\pi_0\circ p_{N_0}$.
\end{corollary}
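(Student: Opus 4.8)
The plan is to derive Corollary~\ref{pro0} from Corollary~\ref{am2} by checking that all of its hypotheses are met and that the conclusion can be upgraded from factor representations to irreducible ones. First I would observe that every compact topological group is amenable (its normalized Haar measure provides a left-invariant mean, as already noted in the preliminary discussion of amenability), so the filter basis $\Nc$ of compact normal subgroups of~$G$ consists of amenable topological groups with their induced topology. An irreducible unitary representation $\pi\colon G\to U(\Hc)$ is in particular a factor representation and is continuous with respect to the strong operator topology, so Corollary~\ref{am2} applies and yields some $N_0\in\Nc$ together with a factor representation $\pi_0\colon G/N_0\to\Bc(\Hc)$ with $\pi=\pi_0\circ p_{N_0}$. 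Since $p_{N_0}$ is surjective with image a group, $\pi_0$ actually takes values in $U(\Hc)$, so $\pi_0$ is a unitary representation.

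The remaining point is that $\pi_0$ is not merely a factor representation but actually \emph{irreducible}. Here I would argue directly: a closed $\pi_0$-invariant subspace $\Kc\subseteq\Hc$ is automatically $\pi$-invariant because $\pi=\pi_0\circ p_{N_0}$ and $p_{N_0}$ is surjective; since $\pi$ is irreducible, $\Kc\in\{\{0\},\Hc\}$, hence $\pi_0$ is irreducible. (Equivalently, irreducibility of $\pi$ means $\pi(G)'=\CC\1$, and $\pi(G)=\pi_0(G/N_0)$ since $p_{N_0}$ is onto, so $\pi_0(G/N_0)'=\CC\1$ as well.) Thus $\pi_0\colon G/N_0\to U(\Hc)$ is an irreducible unitary representation with $\pi=\pi_0\circ p_{N_0}$, which is exactly the assertion.

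I do not anticipate a genuine obstacle, since this corollary is essentially a specialization; the only things to be careful about are the two routine upgrades (factor $\rightsquigarrow$ irreducible, and $\Bc(\Hc)$-valued $\rightsquigarrow$ $U(\Hc)$-valued), both of which follow immediately from the surjectivity of the quotient homomorphism $p_{N_0}$. If one wanted to be fully self-contained one could also note that the continuity of $\pi_0$ with respect to the strong operator topology (needed so that "irreducible representation" is meant in the usual topological-group sense) is inherited from the continuity of $\pi$ together with the fact that $p_{N_0}$ is a quotient map, exactly as in the proof of Theorem~\ref{am2a}.
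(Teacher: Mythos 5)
Your proposal is correct and follows exactly the paper's route: the paper simply states that Corollary~\ref{pro0} is a special case of Corollary~\ref{am2}, and your argument spells out the details (compact implies amenable, irreducible implies factor, and the two routine upgrades via surjectivity of $p_{N_0}$) that the paper leaves implicit.
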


\begin{proof}
This is a special case of Corollary~\ref{am2}
\end{proof}

\subsection*{Application to infinite direct products of amenable groups}
We now construct the main class of non-locally-compact groups to which the above results can be applied.
To this end we need the following proposition on amenability of any infinite direct product of amenable topological groups,
for later use in the proof of Corollary~\ref{O4_cor2}.
This is a result that we were not able to locate in the existing literature,
althought it has been long known that
in the category of discrete groups an infinite direct product of amenable groups may not be amenable
(see \cite[page 517]{Da57}).
The point of Proposition~\ref{product} below is that the explanation of that seemingly pathological behavior
is that an infinite direct product of discrete topological spaces is not a discrete topological space.
Amenability does behave well with respect to infinite direct products
as soon as we endow any infinite product of discrete groups with its proper topology,
which is an infinite direct product of discrete topologies.

\begin{proposition}\label{product}
If $\{G_j\}_{j\in J}$ is a family of amenable topological groups,
then their direct product $G:=\prod\limits_{j\in J}G_j$ is also an amenable topological group.
\end{proposition}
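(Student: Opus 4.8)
The plan is to work directly with the defining $C^*$-algebra $\LUCb(G)$ and to build an invariant mean on it out of invariant means on the factors, exploiting the fact that the direct-product topology makes finitely-supported coordinate functions dense in a suitable sense. First I would fix, for each $j\in J$, a left invariant mean $\mu_j$ on $\LUCb(G_j)$, which exists since $G_j$ is amenable. For each finite subset $F\subseteq J$, write $G_F:=\prod_{j\in F}G_j$ and $p_F\colon G\to G_F$ for the canonical projection; then $G_F$ is amenable (a finite product of amenable groups is amenable — this is elementary, or one takes the iterated product of the $\mu_j$ for $j\in F$, which is left invariant because each $\mu_j$ is) and carries a left invariant mean $\mu_F$ on $\LUCb(G_F)$. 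Pulling back along $p_F$ gives a positive unital functional $\mu_F\circ p_F^*$ on the subalgebra $p_F^*(\LUCb(G_F))\subseteq\LUCb(G)$, and these subalgebras form a directed system as $F$ increases (since $F\subseteq F'$ gives a compatible projection $G_{F'}\to G_F$ and $p_F=p_{F'F}\circ p_{F'}$), with the pulled-back means being mutually compatible.

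The key structural point is that $\Tc_0:=\bigcup_F p_F^*(\LUCb(G_F))$ is \emph{dense} in $\LUCb(G)$ for the sup norm. This is where the product topology does the work: if $f\in\LUCb(G)$, then left uniform continuity gives, for $\varepsilon>0$, a neighbourhood $V$ of $\1\in G$ with $\Vert L_gf-f\Vert_\infty<\varepsilon$ for $g\in V$, and $V$ contains a basic open set $\prod_{j\notin F}G_j\times\prod_{j\in F}V_j$ for some finite $F$; one then checks that $f$ differs by at most $\varepsilon$ (in sup norm) from a function depending only on the coordinates in $F$, obtained by averaging $f$ over the tail group $N_F:=\prod_{j\notin F}G_j$ — or, more cleanly, one observes that $f$ is within $\varepsilon$ of its "restriction along a section" which factors through $G_F$ and is itself left uniformly continuous on $G_F$. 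Having established density, the compatible family $\{\mu_F\circ p_F^*\}_F$ extends uniquely to a bounded functional $\mu$ on all of $\LUCb(G)$ with $\Vert\mu\Vert=1=\mu(\1)$; positivity passes to the closure since the positive cone is closed.

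Finally I would verify left invariance of $\mu$. For $g\in G$ and $\varphi\in\LUCb(G)$, approximate $\varphi$ in sup norm by some $\psi=p_F^*(\psi_0)\in\Tc_0$; then $L_g\varphi$ is approximated by $L_g\psi=p_F^*(L_{p_F(g)}\psi_0)$, and $\mu(L_g\psi)=\mu_F(L_{p_F(g)}\psi_0)=\mu_F(\psi_0)=\mu(\psi)$ using left invariance of $\mu_F$ on $G_F$. Letting the approximation tighten and using $\Vert\mu\Vert=1$ (so $\mu$ is norm-continuous) gives $\mu(L_g\varphi)=\mu(\varphi)$, so $G$ is amenable. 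The main obstacle I anticipate is the density claim for $\Tc_0$ in $\LUCb(G)$: one must argue carefully that a left uniformly continuous bounded function on an infinite product is sup-norm approximable by functions of finitely many coordinates, which really uses that the product topology is generated by finitely-restricted cylinders together with the quantitative uniform-continuity estimate — and one must make sure the approximating function is again left uniformly continuous (on $G_F$), not merely bounded, so that the means $\mu_F$ actually apply to it. Once density is in hand, the extension and the invariance check are routine.
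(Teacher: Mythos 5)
Your proposal is correct in substance and shares the paper's two main pillars: the reduction to finite sub-products $G_F$ (amenable as finite products of amenable groups) and the density of the cylinder algebra $\bigcup_F p_F^*(\LUCb(G_F))$ in $\LUCb(G)$, which you prove by exactly the argument the paper uses in its Lemma \ref{dense} (uniform continuity forces a basic neighbourhood $\prod_{j\in F}V_j\times\prod_{j\notin F}G_j$, and $\varphi$ is then within $\varepsilon$ of $\varphi\circ\iota_F\circ p_F$). Where you diverge is the gluing step. You build a \emph{coherent} family of means $\mu_F$ as iterated products of fixed means $\mu_j$ and extend the resulting functional from the dense union by norm continuity; the paper instead takes, for each $F$, \emph{any} Hahn--Banach extension of an invariant mean on $\Ac_F$ to all of $\LUCb(G)$, observes that the resulting sets $\Mg_{G^{(F)}}$ of partially invariant states are nested, weak$^*$-compact and nonempty, and extracts a common element by Banach--Alaoglu plus the finite intersection property (its Lemma \ref{proj}). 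The compactness route is more robust: it needs no compatibility between the means for different $F$. Your route does need compatibility, and this is the one point you should make explicit: invariant means are highly non-unique, so ``the pulled-back means are mutually compatible'' is false for arbitrary choices of $\mu_F$; it holds for iterated product means because applying $\mu_j$ in a variable on which the function does not depend is the identity, but you must (a) verify that the product of two means on $\LUCb$ is well defined (partial evaluation and partial integration preserve left uniform continuity --- true, by the uniformity of the $\LUCb$ estimate in the other variable) and left invariant, and (b) fix once and for all an order of iteration (e.g.\ a total order on $J$) so that $\mu_{F'}$ restricted to functions of the $F$-coordinates really returns $\mu_F$. With that detail supplied, your extension-by-continuity argument, the positivity of the limit functional (a norm-one unital functional on a unital $C^*$-algebra is automatically positive), and the invariance check $\mu(L_g(p_F^*\psi_0))=\mu_F(L_{p_F(g)}\psi_0)=\mu_F(\psi_0)$ all go through, so your proof is a valid alternative to the paper's; it buys a somewhat more constructive mean at the cost of the coherence bookkeeping, while the paper's compactness argument generalizes more readily (it is stated for an arbitrary filter basis of closed normal subgroups with amenable quotients, not just product tails).
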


For proving Proposition~\ref{product} we need Lemmas \ref{dense} and \ref{proj} below.

\begin{lemma}\label{dense}
Let $\{H_j\}_{j\in J}$ be any family of topological groups.
For every finite subset $F\subseteq J$ we denote
$$H_F:=\prod\limits_{j\in F}G_j
\text{ and }
p_F\colon H\to H_F, (h_j)_{j\in J}\mapsto (h_j)_{j\in F}.$$
We define
$$\Ac:=\bigcup_F\Ac_F$$
where $\Ac_F:=\{\psi\circ p_F\mid \psi\in\LUCb(H_F)\}$ as $F$ runs over all finite subsets of $J$.

Then $\Ac$ is a dense subset of $\LUCb(H)$.
\end{lemma}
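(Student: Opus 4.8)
The plan is to show that $\Ac$ is a $C^*$-subalgebra of $\LUCb(H)$ that separates points and then invoke an appropriate Stone--Weierstrass theorem. Actually, since $H$ need not be compact, the classical Stone--Weierstrass theorem does not directly apply; the cleaner route is to argue that $\Ac$ is uniformly dense in $\LUCb(H)$ by a direct approximation argument using left uniform continuity. So first I would verify that $\Ac$ is a subalgebra of $\LUCb(H)$: each $\psi\circ p_F$ indeed lies in $\LUCb(H)$ because $p_F\colon H\to H_F$ is a continuous surjective homomorphism and precomposition with it carries $\LUCb(H_F)$ into $\LUCb(H)$ (the map $s\mapsto L_s(\psi\circ p_F)$ factors through $s\mapsto L_{p_F(s)}\psi$, which is continuous); and given two finite sets $F_1,F_2$, both $\Ac_{F_1}$ and $\Ac_{F_2}$ embed into $\Ac_{F_1\cup F_2}$ via the canonical projections $H_{F_1\cup F_2}\to H_{F_i}$, so $\Ac$ is closed under sums and products and is conjugation-closed, and it contains the constants.

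The main step is the density. Fix $f\in\LUCb(H)$ and $\varepsilon>0$. Because $f$ is left uniformly continuous, the map $s\mapsto L_sf$ from $H$ to $\Cb(H)$ is continuous at $\1$, so there is a neighborhood $U$ of $\1\in H$ with $\Vert L_sf-f\Vert_\infty<\varepsilon$ for all $s\in U$. The topology on $H=\prod_{j\in J}G_j$ is the product topology, so $U$ contains a basic neighborhood of the form $V_F:=\{h\in H\mid h_j\in U_j\text{ for }j\in F\}$ for some finite $F\subseteq J$ and neighborhoods $U_j$ of $\1\in G_j$; that is, whenever $h,h'\in H$ agree in all coordinates outside $F$, we get $\Vert L_{h(h')^{-1}}f - f\Vert_\infty$ controlled, and in particular $\vert f(h) - f(h')\vert<\varepsilon$. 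Hence $f$ depends, up to $\varepsilon$, only on the $F$-coordinates: more precisely, if we set $N_F:=\ker p_F=\prod_{j\notin F}G_j$, then $f$ varies by at most $\varepsilon$ along each coset of $N_F$. I would then define $\psi\colon H_F\to\CC$ by picking any section, e.g. $\psi(p_F(h)):=f(\sigma(p_F(h)))$ where $\sigma\colon H_F\to H$ is the obvious inclusion sending the missing coordinates to $\1$; then $\Vert f-\psi\circ p_F\Vert_\infty\le\varepsilon$. It remains to check $\psi\in\LUCb(H_F)$, which follows because $\psi = f\circ\sigma$ with $\sigma$ continuous and $f$ bounded, and left uniform continuity of $\psi$ transfers from that of $f$ along the continuous homomorphism $\sigma$ (or, more carefully, because $p_F$ is an open continuous surjective homomorphism, $\LUCb(H_F)$ is exactly the set of functions in $\LUCb(H)$ constant on cosets of $N_F$, pulled back). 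This yields $\psi\circ p_F\in\Ac_F\subseteq\Ac$ within $\varepsilon$ of $f$, proving density.

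The step I expect to be the main obstacle is the clean identification of $\psi$ as an element of $\LUCb(H_F)$ and the verification that the "approximately factors through $p_F$" argument is rigorous rather than merely plausible. The subtlety is that left uniform continuity of $f$ gives a uniform bound on $\Vert L_sf-f\Vert_\infty$ only for $s$ in a single neighborhood of $\1$, but I need the section-based $\psi$ to itself be left uniformly continuous on $H_F$; the cleanest fix is to observe that for any neighborhood $W$ of $\1$ in $H_F$, its preimage $p_F^{-1}(W)$ is a neighborhood of $\1$ in $H$ (continuity of $p_F$), and $\sigma(W)\subseteq p_F^{-1}(W)$, so $\Vert L_w\psi-\psi\Vert_{\infty,H_F}=\Vert L_{\sigma(w)}(f)|_{\sigma(H_F)}-f|_{\sigma(H_F)}\Vert_\infty\le\Vert L_{\sigma(w)}f-f\Vert_\infty$, which is small for $w$ near $\1$. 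Everything else — the algebra structure, stability under conjugation, containment of constants — is routine bookkeeping with the functoriality of $\LUCb(\cdot)$ under continuous homomorphisms.
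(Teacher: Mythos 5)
Your argument is correct and is essentially the paper's own proof: both use left uniform continuity of $f$ to produce a basic product neighborhood $\bigl(\prod_{j\in F}V_j\bigr)\times\prod_{j\notin F}G_j$ of the identity, conclude that $f$ varies by at most $\varepsilon$ on each coset of $\ker p_F=\prod_{j\notin F}G_j$, and then approximate $f$ by $(f\circ\iota_F)\circ p_F$, where $\iota_F$ is the canonical section filling in identities, with $f\circ\iota_F\in\LUCb(H_F)$ because $\iota_F$ is a continuous homomorphism. The only blemish is the phrase ``agree in all coordinates outside $F$,'' which should read ``agree in all coordinates in $F$'' (i.e.\ lie in the same coset of $N_F$), as your very next sentence correctly states.
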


\begin{proof}
For every finite subset $F\subseteq J$ we also denote
$$H^{(F)}:=\prod\limits_{j\in J\setminus F}H_j.$$
There exists a canonical isomorphism $H_F\simeq H/H^{(F)}$, and thus $p_F$ can be identified with
the quotient map $p_{H^{(F)}}\colon H\to H/H^{(F)}$.
Moreover, we will need the canonical injective morphism
$$\iota_F\colon H_F\hookrightarrow H$$
defined by
$\iota_F((h_j)_{j\in F})=(g_j)_{j\in J}$, where $g_j:=h_j$ if $j\in F$ and $g_j:=\1\in H_j$ if $j\in J\setminus F$.

Now let $\varphi\in\LUCb(H)$ and $\varepsilon>0$ arbitrary.
Then there exists a neighborhood $V$ of $\1\in H$ with $\vert\varphi(g)-\varphi(h)\vert<\varepsilon$
for all $g,h\in H$ with $gh^{-1}\in V$.
By the definition of the topology of $H$, there exist a finite subset $F\subseteq J$
and for every $j\in F$ there exists a neighborhood $V_j$ of $\1\in H_j$ with
\begin{equation}\label{dense_proof_eq1}
\Bigl(\prod\limits_{j\in F}V_j\Bigr)\times H^{(F)}\subseteq V.
\end{equation}
We will prove that
\begin{equation}\label{dense_proof_eq2}
\Vert \varphi-(\varphi\circ\iota_F)\circ p_F\Vert_\infty\le\varepsilon
\end{equation}
and this will complete the proof, because $\varphi\in\LUCb(H)$ implies $\varphi\circ\iota_F\in\LUCb(H_F)$
(since $\iota_F$ is a homomorphism of topological groups) hence $(\varphi\circ\iota_F)\circ p_F\in\Ac$.
In order to prove \eqref{dense_proof_eq2}, we note that for arbitrary $g=(g_j)_{j\in J}\in H$
we have $(\iota_F\circ p_F)(g)=h:=(h_j)_{j\in J}$,
where $h_j=g_j$ if $j\in F$ and $h_j=\1\in G_j$ if $j\in J\setminus F$.
Hence $gh^{-1}=(k_j)_{j\in J}$, where
$k_j=\1\in V_j$ for $j\in F$ and $k_j\in G_j$ for $j\in J\setminus F$.
Thus, using \eqref{dense_proof_eq1}, we obtain $g((\iota_F\circ p_F)(g))^{-1}\in V$ for every $g\in H$.
According to the way $V$ was selected, we then obtain \eqref{dense_proof_eq2}, and we are done.
\end{proof}

\begin{remark}
\normalfont
It is easily seen that the method of proof of Lemma~\ref{dense} allows us to obtain a more precise conclusion,
namely that for every $\varphi\in\LUCb(G)$ one has
$$\lim\limits_F\Vert \varphi-(\varphi\circ\iota_F\circ p_F)\Vert_\infty=0$$
where the limit is taken with respect to the upwards directed set of all finite subsets of $J$,
partially ordered with respect to inclusion.
\end{remark}

\begin{lemma}\label{proj}
Let $H$ be a topological group with a filter basis $\Nc$ of closed normal subgroups.
For every $N\in\Nc$ we assume the quotient group $H/N$ is amenable,
we denote by $p_N\colon H\to H/N$ the corresponding quotient map,
and we define $\Ac_N:=\{\psi\circ p_N\mid \psi\in\LUCb(H/N)\}$.
Also define $\Ac:=\bigcup\limits_{N\in\Nc}\Ac_N$  with its closure in $\LUCb(H)$ denoted by $\overline{\Ac}$.
Then $\overline{\Ac}$ is invariant to the left-translation operator $L_s\colon \LUCb(H)\to \LUCb(H)$ for every $s\in H$,
and there exists a left invariant mean on $\overline{\Ac}$.
\end{lemma}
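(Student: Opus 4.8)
The plan is to build a left invariant mean on the subspace $\Ac$ out of the amenability of the quotients $H/N$, and then to extend it by continuity to $\overline{\Ac}$; the translation-invariance of $\overline{\Ac}$ is a short separate computation, which I carry out first. For $N\in\Nc$, $s\in H$ and $\psi\in\LUCb(H/N)$ the identity $p_N(sh)=p_N(s)p_N(h)$ gives $L_s(\psi\circ p_N)=(L_{p_N(s)}\psi)\circ p_N$; since left translations map $\LUCb(H/N)$ into itself, this shows $L_s\Ac_N\subseteq\Ac_N$ for every $N\in\Nc$, hence $L_s\Ac\subseteq\Ac$. As $H$ is a group, $L_s$ is an isometric bijection of $\LUCb(H)$, in particular continuous, so $L_s\overline{\Ac}\subseteq\overline{L_s\Ac}\subseteq\overline{\Ac}$, which is the first assertion.

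For the mean, fix for each $N\in\Nc$ a left invariant mean $\mu_N$ on $\LUCb(H/N)$ (it exists because $H/N$ is amenable). Since $p_N$ is surjective, $\psi\mapsto\psi\circ p_N$ is an isometric unital $*$-homomorphism of $\LUCb(H/N)$ onto the unital $C^*$-subalgebra $\Ac_N$ of $\LUCb(H)$, so $\widetilde\mu_N(\psi\circ p_N):=\mu_N(\psi)$ defines a mean on $\Ac_N$, and by the displayed identity and the left invariance of $\mu_N$ it satisfies $\widetilde\mu_N\circ L_s=\widetilde\mu_N$ on $\Ac_N$ for every $s\in H$. If $N_1\subseteq N_2$ then $\psi\circ p_{N_2}$ factors through $p_{N_1}$, so $\Ac_{N_2}\subseteq\Ac_{N_1}$; hence, $\Nc$ being a filter basis, $\{\Ac_N\}_{N\in\Nc}$ is upward directed, $\Ac=\bigcup_{N\in\Nc}\Ac_N$ is a unital $*$-subalgebra of $\LUCb(H)$, and every $\varphi\in\Ac$ lies in $\Ac_N$ for all $N\in\Nc$ small enough. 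Now fix an ultrafilter $\Uc$ on $\Nc$ refining the filter generated by the sets $\{N'\in\Nc\mid N'\subseteq N\}$, $N\in\Nc$, and put
$$\mu(\varphi):=\lim_{N'\to\Uc}\widetilde\mu_{N'}(\varphi)\qquad\text{for }\varphi\in\Ac.$$
For each $\varphi$ the quantity $\widetilde\mu_{N'}(\varphi)$ is defined for all $N'$ in a member of $\Uc$ and bounded there by $\Vert\varphi\Vert_\infty$, so the limit exists and is unique, $\mu$ is linear, $|\mu(\varphi)|\le\Vert\varphi\Vert_\infty$, and $\mu(\1)=1$; thus $\mu$ is a mean on $\Ac$ ($\mu$ is automatically positive, since a norm-one functional taking the value $1$ at the identity extends, by Hahn--Banach, to a state of $\LUCb(H)$, and positive elements of $\Ac$ are positive in $\LUCb(H)$), and $\mu\circ L_s=\mu$ because $L_s\varphi\in\Ac_{N'}$ whenever $\varphi\in\Ac_{N'}$ and $\widetilde\mu_{N'}\circ L_s=\widetilde\mu_{N'}$ there.

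It remains to extend $\mu$ to $\overline{\Ac}$: being bounded it has a unique bounded linear extension $\overline\mu$ to $\overline{\Ac}$, which satisfies $\Vert\overline\mu\Vert=\overline\mu(\1)=1$ and hence is again a mean, and $\overline\mu\circ L_s=\overline\mu$ follows from $\mu\circ L_s=\mu$ on $\Ac$ by continuity of $L_s$ together with $L_s\overline{\Ac}\subseteq\overline{\Ac}$. I expect the only genuine point to be the gluing in the middle step: the means $\widetilde\mu_N$ produced from the various quotients cannot in general be chosen compatibly with the canonical maps $H/N_1\to H/N_2$, so a limiting device over the directed set $\Nc$ is unavoidable — here the ultrafilter limit, or equivalently a weak-$*$ cluster point of Hahn--Banach extensions of the $\widetilde\mu_N$ to $\LUCb(H)$. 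Everything else is routine bookkeeping with the elementary properties of means recalled in Section~\ref{factor}.
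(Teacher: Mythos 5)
Your proof is correct and follows essentially the same route as the paper: transport a left invariant mean from each $\LUCb(H/N)$ to $\Ac_N$ via the isometric embedding $\psi\mapsto\psi\circ p_N$, use the directedness $\Ac_{N_2}\subseteq\Ac_{N_1}$ for $N_1\subseteq N_2$, and glue by a weak$^*$ compactness argument. The only (interchangeable) difference is the gluing device: you take an ultrafilter limit of the individual means $\widetilde\mu_{N'}$ along $\Nc$, whereas the paper intersects the decreasing family of nonempty weak$^*$-compact sets $\Mg_N$ of Hahn--Banach extensions to $\LUCb(H)$ --- an equivalence you yourself point out at the end.
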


\begin{proof}
Using that $p_N\colon H\to H/N$ is a surjective morphism of groups,
it folows that $\Ac_N\subseteq\LUCb(H)$ is a closed linear subspace
that contains the constant functions and is invariant to the operator $L_s\colon \LUCb(H)\to\LUCb(H)$
for arbitrary $s\in H$.
In fact, for all $x\in H$ we have
$$(L_s(\psi\circ p_N))(x)
=(\psi\circ p_N)(sx)
=\psi(p_N(s)p_N(x))
=(L_{p_N(s)}\psi)(p_N(x)) $$
hence $L_s(\psi\circ p_N)=(L_{p_N(s)}\psi)\circ p_N$,
and this shows that $\Ac_N$ is invariant under $L_s$.

We denote by $\Mg_N$ the set of all  linear functionals $\mu\colon\LUCb(H)\to\CC$
satisfying $0\le \mu(\varphi)$ if $0\le\varphi\in\Ac_N$;
$\Vert\mu\Vert=\mu(\1)=1$, where $\1$ is the constant function equal to~$1$ on~$H$;
and $\mu\circ L_s=\mu$ on $\Ac_N$ for all $s\in H$.
It is clear that $\Mg_N$ is a weak$^*$-compact subset of the unit ball of $\LUCb(H)^*$.
To see that $\Mg_N\ne\emptyset$, we may select any left invariant mean $\mu_0\colon \LUCb(H/N)\to\CC$,
since $G/N$ is amenable.
Then we define $\widetilde{\mu}_0\colon\Ac_N\to\CC$, $\widetilde{\mu}_0(\psi\circ p_N):=\mu_0(\psi)$,
and we use the Hahn-Banach theorem to find a linear functional $\mu\colon\LUCb(H)\to\CC$
with $\mu\vert_{\Ac_N}=\widetilde{\mu}_0$ and $\Vert\mu\Vert=\Vert\widetilde{\mu}_0\Vert=1$.
Then it is easily checked that $\mu\in\Mg_N$, hence $\Mg_N\ne\emptyset$.

We now recall that if $N_1,N_2\in\Nc$ with $N_1\supseteq N_2$,
then we have a natural surjective morphism of groups
$p_{N_2,N_1}\colon G/N_2\to G/N_1$ satisfying $p_{N_1}=p_{N_2,N_1}\circ p_{N_2}$,
and this directly implies that $\Ac_{N_1}\subseteq\Ac_{N_2}$ and then $\Mg_{N_1}\supseteq\Mg_{N_2}$.
Thus $\{\Mg_N\}_{N\in\Nc}$ is a family of weak$^*$-compact nonempty subsets of the unit ball of $\LUCb(H)^*$
and the intersection of every finite subfamily is nonempty.
Since the unit ball of $\LUCb(H)^*$ is weak$^*$-compact by the Banach-Alaoglu theorem,
it then follows that there exists $\mu\in\bigcap\limits_{N\in\Nc}\Mg_N$.
Then for every $s\in S$ and $N\in\Nc$ we have $\mu\circ L_s=\mu$ on $\Ac_N$
hence, we obtain $\mu\circ L_s=\mu$ on $\Ac$.
Thus $\mu$ is a left invariant mean on $\overline{\Ac}$, and this completes the proof.
\end{proof}

\begin{proof}[Proof of Proposition~\ref{product}]
We first introduce notation similar to that used in the proofs of Lemmas \ref{proj} and \ref{dense}.
Thus, for every finite subset $F\subseteq J$ we denote
$$G_F:=\prod\limits_{j\in F}G_j,\ G^{(F)}:=\prod\limits_{j\in J\setminus F}G_j,
\ p_F\colon G\to G_F, (g_j)_{j\in J}\mapsto (g_j)_{j\in F}.$$
One has the canonical isomorphism $G_F\simeq G/G^{(F)}$, and thus $p_F$ can be identified with
the quotient map $p_{G^{(F)}}\colon G\to G/G^{(F)}$.
Moreover, there is the canonical injective morphism
$$\iota_F\colon G_F\hookrightarrow G$$
defined by
$\iota_F((h_j)_{j\in F})=(g_j)_{j\in J}$, where $g_j:=h_j$ if $j\in F$ and $g_j:=\1\in G_j$ if $j\in J\setminus F$.
Using these maps we define
$$\Ac:=\bigcup_F\Ac_F$$
where $\Ac_F:=\{\psi\circ p_F\mid \psi\in\LUCb(G_F)\}$ as $F$ runs over all finite subsets of $J$.

We now apply Lemma~\ref{proj} for $H:=G$
and
$$\Nc:=\{G^{(F)}\mid F\subseteq J,\ \vert F\vert<\infty\}.$$
For every finite subset $F\subseteq J$ the group
$G/G^{(F)}\simeq G_F$ is a direct product of finitely many amenable groups, hence it is well known that $G_F$ is amenable.
Thus the hypotheses of Lemma~\ref{proj} are fulfilled, and we obtain
a left invariant mean on $\overline{\Ac}$.
On the other hand, one has $\overline{\Ac}=\LUCb(G)$ by Lemma~\ref{dense}.
It follows that there exists a left invariant mean on $G$, hence $G$ is amenable, and we are done.
\end{proof}

\begin{remark}
\normalfont
The proof of Proposition~\ref{product} uses some ideas from the proof of \cite[Lemma 1]{Ko82},
which seems however to need some extra arguments,
because it is not clear how to generalize the above Lemma~\ref{dense} from infinite direct products to arbitrary projective limits.
In this connection, it would be interesting to see if every projective limit of amenable topological groups is
again an amenable topological group with respect to its projective limit topology.
Just as in the special case of infinite direct products of discrete groups,
it has long been known that a projective limit of amenable discrete groups may not be amenable as a discrete group
(see \cite[page 517]{Da57} again).
\end{remark}

Now we can draw the following consequence of Corollary~\ref{am2}.

\begin{corollary}\label{am3}
Let $\{G_j\}_{j\in J}$ be any family of amenable topological groups,
with their direct product $G:=\prod\limits_{j\in J}G_j$.
For every factor representation $\pi\colon G\to U(\Hc)$
which is continuous with respect to the strong operator topology
there exists a finite subset $F\subseteq J$  and a factor representation $\pi_0\colon G_F\to\Bc(\Hc)$ with
$\pi=\pi_0\circ p_F$, where $G_F:=\prod\limits_{j\in F}G_j$ and $p_F\colon G\to G_F$ is the natural projection.
\end{corollary}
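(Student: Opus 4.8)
The plan is to deduce this directly from Corollary~\ref{am2}, applied to a suitable filter basis of closed normal subgroups of $G$ converging to the identity whose members are all amenable. The natural candidate is
$$\Nc:=\{G^{(F)}\mid F\subseteq J,\ \vert F\vert<\infty\},\qquad G^{(F)}:=\prod_{j\in J\setminus F}G_j,$$
each $G^{(F)}$ being regarded as a closed normal subgroup of $G$ in the obvious way (fixing the coordinates indexed by $F$ to be $\1$).

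First I would check that $\Nc$ is indeed a filter basis of closed normal subgroups converging to the identity. Normality and closedness are immediate from the product structure. Given finite subsets $F_1,F_2\subseteq J$, one has $G^{(F_1\cup F_2)}=G^{(F_1)}\cap G^{(F_2)}\in\Nc$, so $\Nc$ is a filter basis. Convergence to the identity is just the definition of the product topology on $G$: every neighbourhood of $\1\in G$ contains a basic neighbourhood of the form $\bigl(\prod_{j\in F}V_j\bigr)\times G^{(F)}$ for some finite $F\subseteq J$, and hence contains $G^{(F)}\in\Nc$.

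The key input is that each $G^{(F)}$, with its topology induced from $G$, is an amenable topological group. But the induced topology on $G^{(F)}$ is exactly the product topology on $\prod_{j\in J\setminus F}G_j$, which is again an infinite direct product of amenable topological groups, so its amenability follows from Proposition~\ref{product} (when $J\setminus F$ is finite this is the elementary finite case).

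With these verifications in place, Corollary~\ref{am2} applies: for every factor representation $\pi\colon G\to U(\Hc)$ continuous with respect to the strong operator topology there exist $N_0\in\Nc$ and a factor representation $\pi_0'\colon G/N_0\to\Bc(\Hc)$ with $\pi=\pi_0'\circ p_{N_0}$. Writing $N_0=G^{(F)}$ for a suitable finite $F\subseteq J$ and using the canonical isomorphism $G/G^{(F)}\simeq G_F$ under which $p_{G^{(F)}}$ corresponds to the projection $p_F$, we transport $\pi_0'$ to the desired factor representation $\pi_0\colon G_F\to\Bc(\Hc)$ with $\pi=\pi_0\circ p_F$. Essentially all the work is carried by Proposition~\ref{product} and Corollary~\ref{am2}, so the only point requiring care is the routine bookkeeping identifying $G/G^{(F)}$ with $G_F$ and $p_{G^{(F)}}$ with $p_F$; there is no substantial obstacle.
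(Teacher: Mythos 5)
Your proposal is correct and follows exactly the route the paper takes: its entire proof of this corollary is ``Use Corollary~\ref{am2} along with Proposition~\ref{product},'' and your write-up simply makes explicit the verifications (filter basis $\{G^{(F)}\}$ converging to the identity, amenability of each $G^{(F)}$ via Proposition~\ref{product}, identification $G/G^{(F)}\simeq G_F$) that the paper leaves implicit. No gaps.
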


\begin{proof}
Use Corollary~\ref{am2} along with Proposition~\ref{product}.
\end{proof}

\section{Coadjoint orbits and representations}\label{main}

In this section we study coadjoint orbits of pro-Lie groups and their corresponding unitary representations.
The first part of the following definition is suggested by a similar, more general, notion
of topological group with Lie algebra (see \cite[Def. 2.11]{HM07}).
We choose to work here only with topological groups whose Lie algebras are locally convex
because this class is general enough for our purposes and, on the other hand,
the local convexity assumption ensures that the dual spaces of these Lie algebras are nontrivial, hence
it is meaningful to study their corresponding coadjoint actions.

\begin{definition}\label{grlcvx}
\normalfont
Let $G$ be any topological group and endow its set of continuous 1-parameter subgroups,
$$\Lie(G):=\{X\in\Cc(\RR,G)\mid
(\forall t,s\in\RR)\quad X(t+s)=X(t)X(s)\}.$$
with the topology of uniform convergence on the compact subsets of~$\RR$.

We say that $G$ is a \emph{topological group with locally convex Lie algebra} if
the topological space $\Lie(G)$ has the structure of a locally convex Lie algebra over~$\RR$,
whose scalar multiplication, vector addition and bracket
satisfy the following conditions for all
$t,s\in\RR$ and $X_1,X_2\in\Lie(G)$:
\begin{eqnarray}
\nonumber
 (t\cdot X_1)(s) &=& X_1(ts);  \\
\nonumber
(X_1+X_2)(t) &=& \lim\limits_{n\to\infty}(X_1(t/n)X_2(t/n))^n;\\
\nonumber
[X_1,X_2](t^2) &=& \lim\limits_{n\to\infty}(X_1(t/n)X_2(t/n)X_1(-t/n)X_2(-t/n))^{n^2},
\end{eqnarray}
where the convergence is assumed to be uniform on the compact subsets of~$\RR$.
If this is the case, then we define
$$\Lie(G)^*:=\{\xi\colon\Lie(G)\to\RR\mid\xi\text{ is linear and continuous}\}$$
and we regard $\Lie(G)^*$ as a locally convex real vector space, endowed with its weak dual topology,
that is, the topology of pointwise convergence on~$\Lie(G)$.

We recall (see for instance \cite{HM07} or \cite{BNi15}
)
the \emph{adjoint action}
$$\Ad_G\colon G\times\Lie(G)\to\Lie(G),\quad (g,X)\mapsto\Ad_G(g)X:=g X(\cdot)g^{-1} $$
and this defines by duality the \emph{coadjoint action}
$$\Ad_G^*\colon G\times\Lie(G)^*\to\Lie(G)^*,\quad (g,\xi)\mapsto\Ad_G^*(g)\xi:=\xi\circ \Ad_G(g^{-1}). $$
We denote by $\Lie(G)^*/G$ the set of all coadjoint orbits, that is, the orbits of the above coadjoint action.
\end{definition}

\begin{example}
\normalfont
Every connected locally compact group is a topological group with locally convex Lie algebra
(see for instance \cite{Gl57} or \cite{La57}).
\end{example}

\begin{lemma}\label{coadj_lemma}
In Definition~\ref{grlcvx}, the coadjoint action is well defined.
\end{lemma}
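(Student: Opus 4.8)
The statement asks that the coadjoint action $\Ad_G^*\colon G\times\Lie(G)^*\to\Lie(G)^*$ in Definition~\ref{grlcvx} is well defined, i.e.\ that the formula $\Ad_G^*(g)\xi=\xi\circ\Ad_G(g^{-1})$ produces an element of $\Lie(G)^*$ (a continuous linear functional on $\Lie(G)$), and that it defines a genuine left action. The crux is that, for each fixed $g\in G$, the adjoint map $\Ad_G(g)\colon\Lie(G)\to\Lie(G)$, $X\mapsto gX(\cdot)g^{-1}$, is a continuous linear automorphism of the locally convex Lie algebra $\Lie(G)$; once that is known, $\xi\circ\Ad_G(g^{-1})$ is manifestly linear and continuous, hence in $\Lie(G)^*$, and the action axioms follow formally.

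First I would check that $\Ad_G(g)$ maps $\Lie(G)$ into itself: if $X\in\Lie(G)$ then $t\mapsto gX(t)g^{-1}$ is continuous $\RR\to G$ and is a homomorphism since $gX(t+s)g^{-1}=gX(t)g^{-1}\,gX(s)g^{-1}$, so it lies in $\Lie(G)$. Next, linearity: using the defining limit formulas of Definition~\ref{grlcvx} together with the fact that conjugation by $g$ is a topological group automorphism (hence commutes with taking finite products and with limits uniform on compacta), one gets $\Ad_G(g)(t\cdot X)=t\cdot\Ad_G(g)X$ from $(gX(ts)g^{-1})$, and $\Ad_G(g)(X_1+X_2)=\Ad_G(g)X_1+\Ad_G(g)X_2$ from conjugating $(X_1(t/n)X_2(t/n))^n$ by $g$ and passing to the limit; the bracket is preserved by the same device applied to the commutator expression. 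Continuity of $\Ad_G(g)$ on $\Lie(G)$ (with its compact-open topology) follows because $c_g\colon G\to G$, $h\mapsto ghg^{-1}$, is continuous, so if $X_i\to X$ uniformly on a compact $K\subseteq\RR$ then $c_g\circ X_i\to c_g\circ X$ uniformly on $K$ as well; more carefully, one uses continuity of $c_g$ at each point of the compact set $X(K)$ together with a standard tube argument. Finally $\Ad_G(g^{-1})$ is the inverse of $\Ad_G(g)$, so $\Ad_G(g)$ is a topological linear automorphism.

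With this in hand, $\Ad_G^*(g)\xi=\xi\circ\Ad_G(g^{-1})$ is a composition of a continuous linear functional with a continuous linear map, hence lies in $\Lie(G)^*$; and $\Ad_G^*(\1)=\id$ together with $\Ad_G^*(g_1g_2)=\Ad_G^*(g_1)\Ad_G^*(g_2)$ follow from $\Ad_G(g_1g_2)=\Ad_G(g_1)\Ad_G(g_2)$ (which is itself immediate from $(g_1g_2)X(\cdot)(g_1g_2)^{-1}=g_1\bigl(g_2X(\cdot)g_2^{-1}\bigr)g_1^{-1}$) and the contravariance built into the $g^{-1}$. Thus $\Ad_G^*$ is a well-defined left action of $G$ on $\Lie(G)^*$.

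The only mildly delicate point — and the one I would spend real care on — is the continuity of $\Ad_G(g)$ as a self-map of $\Lie(G)$ in the topology of uniform convergence on compacta: one must argue that conjugation by a fixed $g$, being a homeomorphism of $G$ but not necessarily uniformly continuous, nevertheless induces a continuous map at the level of the function spaces $\Cc(\RR,G)$. This is handled by the standard fact that post-composition with a fixed continuous map $G\to G$ is continuous for the compact-open topology; everything else is a routine transfer of the limit formulas through the group automorphism $c_g$, which I would not write out in full.
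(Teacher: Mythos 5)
Your proposal is correct and follows essentially the same route as the paper: the key step in both is to transfer the limit formulas defining addition and scalar multiplication in $\Lie(G)$ through conjugation by $g$, thereby showing $\Ad_G(g)$ is linear, so that $\xi\circ\Ad_G(g^{-1})$ is a continuous linear functional. The only cosmetic difference is that the paper outsources the continuity of $\Ad_G(g)$ to \cite[Prop.~2.28]{HM07} where you argue it directly via the compact-open topology, and you additionally verify the action axioms, which the paper omits.
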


\begin{proof}
We must prove that if $\xi\in\Lie(G)^*$ and $g\in G$ then $\Ad_G^*(g)\xi\in\Lie(G)^*$,
that is, the function $\Ad_G^*(g)\xi\colon\Lie(G)\to\RR$ is indeed continuous and linear.
Since $\Ad_G(g)$ is continuous (see \cite[Prop. 2.28]{HM07}), the function $\Ad^*_G(g)\xi$ is continous on~$\Lie(G)$.
To prove that $\Ad_G(g)$ is linear,
we first note that for every $t\in\RR$ the evaluation map
$\Lie(G)\to G$, $X\mapsto X(t)$, is continuous.
Then for all $X, Y\in\Lie(G)$ and $t\in\RR$ we may write
\begin{align}
\bigl(\Ad_G(g)(X+Y)\bigr)(t)
&= \Ad_G(g)((X+Y)(t)) \nonumber\\
&=\Ad_G(g)(\lim\limits_{n\rightarrow \infty}(X(t/n)Y(t/n))^n) \nonumber\\
&=\lim\limits_{n\to \infty}\left(\Ad_G(g)X(t/n)\Ad_G(g)Y(t/n)\right)^n \nonumber\\
&=\lim\limits_{n\to \infty}\left(\frac{1}{n}\Ad_G(g)X(t).\frac{1}{n}\Ad_G(g)Y(t)\right)^n \nonumber\\
&=\lim\limits_{n\to \infty}\left(\frac{1}{n}(\Ad_G(g)X).\frac{1}{n}\Ad_G(g)Y\right)^n(t) \nonumber\\
&=(\Ad_G(g)X + \Ad_G(g)Y)(t)  \nonumber\\
&=\Ad_G(g)X(t) + \Ad_G(g)Y(t)  \nonumber
\end{align}
Moreover, for all $s,t\in\RR$ and $X\in \Lie(G)$, one has
$$\Ad_G(g)(s X)(t)=\Ad_G(g)(X)(s t)=(s \Ad_G(g)(X))(t).$$
Thus $\Ad_G(g)$ is linear and for all $\xi\in \Lie(G)^*$, the function $\Ad^*_G(g)\xi=\xi\circ \Ad_G(g^{-1})$ is linear.
This completes the proof.
\end{proof}

The result of the above Lemma~\ref{coadj_lemma} should be regarded in the context of linearity properties
of differentials on topological groups (see for instance \cite{BB11} and the references therein).


\begin{lemma}\label{O1}
Let $p\colon G_1\to G_2$ be a continuous surjective morphism of topological groups with Lie algebras.
If $\Oc_2\in\Lie(G_2)^*/G_2$ is a coadjoint orbit of $G_2$,
then the set $\Oc_1:=\Lie(p)^*(\Oc_2)\subseteq\Lie(G_1)^*$ is a coadjoint orbit of $G_1$.
\end{lemma}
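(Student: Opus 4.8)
The statement is Lemma~\ref{O1}: for a continuous surjective morphism $p\colon G_1\to G_2$ of topological groups with Lie algebras, the image $\Lie(p)^*(\Oc_2)$ of a coadjoint orbit $\Oc_2\subseteq\Lie(G_2)^*$ is a coadjoint orbit of $G_1$. The plan is to check two things: that $\Lie(p)^*$ intertwines the two coadjoint actions up to the homomorphism $p$, and that $\Lie(p)$ is surjective. Granting both, the image of a single $G_2$-orbit is a single $G_1$-orbit. I would begin by recalling that $p$ induces a continuous Lie algebra homomorphism $\Lie(p)\colon\Lie(G_1)\to\Lie(G_2)$ sending a one-parameter subgroup $X$ to $p\circ X$; this is standard functoriality (it is compatible with the limit formulas in Definition~\ref{grlcvx} because $p$ is a continuous homomorphism), and then $\Lie(p)^*\colon\Lie(G_2)^*\to\Lie(G_1)^*$ is the transpose $\xi\mapsto\xi\circ\Lie(p)$, which is well defined and continuous because $\Lie(p)$ is continuous and linear.

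The intertwining relation is the first key step. For $g\in G_1$ one has $\Lie(p)\circ\Ad_{G_1}(g)=\Ad_{G_2}(p(g))\circ\Lie(p)$: indeed, applying the left side to a one-parameter subgroup $X$ gives $p\bigl(g\,X(\cdot)\,g^{-1}\bigr)=p(g)\,(p\circ X)(\cdot)\,p(g)^{-1}$, which is exactly $\Ad_{G_2}(p(g))\Lie(p)(X)$, using only that $p$ is a group homomorphism. Dualizing, $\Ad_{G_1}^*(g)\circ\Lie(p)^*=\Lie(p)^*\circ\Ad_{G_2}^*(p(g))$ on $\Lie(G_2)^*$. Hence if $\xi_0\in\Oc_2$ and we set $\eta_0:=\Lie(p)^*(\xi_0)\in\Lie(p)^*(\Oc_2)$, then for every $g\in G_1$ we get $\Ad_{G_1}^*(g)\eta_0=\Lie(p)^*\bigl(\Ad_{G_2}^*(p(g))\xi_0\bigr)\in\Lie(p)^*(\Oc_2)$, so the $G_1$-orbit of $\eta_0$ is contained in $\Lie(p)^*(\Oc_2)$. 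This already shows $\Lie(p)^*(\Oc_2)$ is a union of $G_1$-coadjoint orbits; to see it is a \emph{single} orbit we need that every element of $\Oc_2$ is of the form $\Ad_{G_2}^*(p(g))\xi_0$ for some $g\in G_1$, which is immediate from surjectivity of $p$: any $\xi\in\Oc_2$ equals $\Ad_{G_2}^*(h)\xi_0$ for some $h\in G_2$, and $h=p(g)$. Combining, $\Lie(p)^*(\Oc_2)$ is exactly the $G_1$-orbit of $\eta_0$.

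The main obstacle is the well-definedness and functoriality of $\Lie(p)$ together with the fact that $\Lie(p)^*$ lands in $\Lie(G_1)^*$ — i.e. that $\xi\circ\Lie(p)$ is continuous and linear on $\Lie(G_1)$. Linearity is handled exactly as in the proof of Lemma~\ref{coadj_lemma}: one checks that $\Lie(p)$ respects scalar multiplication via $(t\cdot X)(s)=X(ts)$ and respects addition via the product formula $(X_1+X_2)(t)=\lim_n(X_1(t/n)X_2(t/n))^n$, using continuity of $p$ to pass $p$ through the limit, and then $\xi\circ\Lie(p)$ is linear because both factors are. Continuity of $\xi\circ\Lie(p)$ follows from continuity of $\Lie(p)$, which in turn follows from the fact that uniform convergence on compacta of $X_j\to X$ in $\Lie(G_1)$ forces $p\circ X_j\to p\circ X$ uniformly on compacta since $p$ is continuous. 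Once these routine but somewhat fiddly verifications are in place, the proof is the short computation above. I would organize the write-up by first recording the existence and basic properties of $\Lie(p)$ and $\Lie(p)^*$ (perhaps referring to \cite[Prop. 2.28 and Ch. 2]{HM07} for the functoriality of the Lie-algebra assignment and to the argument of Lemma~\ref{coadj_lemma} for linearity), then giving the intertwining identity, and finally the surjectivity argument pinning down the orbit.
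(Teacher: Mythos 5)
Your proof is correct and follows essentially the same route as the paper's: both hinge on the intertwining identity $\Lie(p)\circ\Ad_{G_1}(g)=\Ad_{G_2}(p(g))\circ\Lie(p)$ (the paper's commutative diagram) together with surjectivity of $p$ to lift the transitive $G_2$-action to $G_1$. If anything, you are slightly more complete, since you explicitly check both inclusions (that $\Lie(p)^*(\Oc_2)$ is $G_1$-invariant and that it is contained in a single orbit), whereas the paper only spells out the transitivity direction.
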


\begin{proof}
One has
\begin{equation}\label{O1_proof_eq1}
\Oc_1=\{\xi\circ\Lie(p)\mid\xi\in\Oc_2\}.
\end{equation}
Let us fix $\xi_2\in\Oc_2$ and denote $\xi_1:=\xi_2\circ\Lie(p)\in\Oc_1$.
We must prove that the coadjoint action of $G_1$ on $\Oc_1$ is transitive,
hence that for arbitrary $\eta_1\in\Oc_1$ there exists $g_1\in G_1$ with $\xi_1\circ\Ad_{G_1}(g_1)=\eta_1$.

Since $\eta_1\in\Oc_1$, it follows by \eqref{O1_proof_eq1} that there exists $\eta_2\in\Oc_2$ with $\eta_2\circ\Lie(p)=\eta_1$.
But $\Oc_2$ is a coadjoint orbit of $G_2$ and $\xi_2,\eta_2\in\Oc_2$, hence there exists $g_2\in G_2$
with $\eta_2=\xi_2\circ\Ad_{G_2}(g_2)$.

As $p\colon G_1\to G_2$ is surjective, there exists $g_1\in G_1$ with $p(g_1)=g_2$, and then
$\eta_2=\xi_2\circ\Ad_{G_2}(p(g_1))$, which further implies
\begin{equation}\label{O1_proof_eq2}
\eta_2\circ\Lie(p)=\xi_2\circ\Ad_{G_2}(p(g_1))\circ\Lie(p)\colon \Lie(G_1)\to\RR.
\end{equation}
On the other hand, every $X\in\Lie(G_1)$ is a function $X\colon\RR\to G_1$ and we have
$(\Lie(p))(X)\in\Lie(G_2)$ defined by $(\Lie(p))(X):=p\circ X$,
hence
$$\begin{aligned}
(\Ad_{G_2}(p(g_1))\circ\Lie(p))(X)
& =p(g_1)((p\circ X)(\cdot))p(g_1)^{-1} \\
& =p\circ (g_1X(\cdot)g_1^{-1}) \\
& =\Lie(p)((\Ad_{G_1}(g_1))(X)).
\end{aligned}$$
We thus obtain the commutative diagram
$$\begin{CD}
\Lie(G_1) @>{\Ad_{G_1}(g_1)}>> \Lie(G_1) \\
@V{\Lie(p)}VV @VV{\Lie(p)}V \\
\Lie(G_2) @>{\Ad_{G_2}(p(g_1))}>> \Lie(G_2)
\end{CD}$$
for $g_1\in G_1$.
Then~\eqref{O1_proof_eq2} is equivalent to
$\eta_2\circ\Lie(p)=\xi_2\circ\Lie(p)\circ\Ad_{G_1}(g_1)$, hence $\eta_1=\xi_1\circ\Ad_{G_1}(g_1)$,
and this completes the proof.
\end{proof}

\begin{lemma}\label{O2}
Let $G$ be a topological group with a filter basis $\Nc$ of closed normal subgroups
converging to the identity.
Assume that for every $N\in\Nc$ the quotient $G/N$ is a Lie group.

Then the following assertions hold:
\begin{enumerate}[(i)]
\item For every $N\in\Nc$ the linear map
$\Lie(p_N)^*\colon \Lie(G/N)^*\to\Lie(G)^*$ is injective and moreover
$$\Lie(G)^*=\bigcup_{N\in\Nc}\Lie(p_N)^*(\Lie(G/N)^*).$$
\item A set $\Oc\subseteq\Lie(G)^*$ is a coadjoint $G$-orbit if and only if there exist $N\in\Nc$
and a coadjoint $(G/N)$-orbit $\Oc_0\subseteq\Lie(G/N)^*$ with $\Lie(p_N)^*(\Oc_0)=\Oc$.
\end{enumerate}
\end{lemma}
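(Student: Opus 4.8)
The plan is to derive both parts from the structure theory recalled in Remark~\ref{N0} together with Lemma~\ref{O1}, part~(ii) being essentially a formal consequence of part~(i). For part~(i) I would start from the topological isomorphism $G\cong\varprojlim_{N\in\Nc}G/N$ of Remark~\ref{N0} and apply the Lie algebra functor of Definition~\ref{grlcvx}. By its basic compatibility with projective limits of pro-Lie groups \cite{HM07}, this yields a topological isomorphism of locally convex Lie algebras $\Lie(G)\cong\varprojlim_{N\in\Nc}\Lie(G/N)$, under which the limit projections are the continuous Lie algebra morphisms $\Lie(p_N)$, and each $\Lie(p_N)\colon\Lie(G)\to\Lie(G/N)$ is surjective. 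Surjectivity of $\Lie(p_N)$ gives at once the injectivity of the transpose $\Lie(p_N)^*$. For the union formula, fix $\xi\in\Lie(G)^*$; since $\Lie(G)$ carries the initial topology defined by the family $\{\Lie(p_N)\}_{N\in\Nc}$, there exist finitely many $N_1,\dots,N_k\in\Nc$, a constant $C>0$, and continuous seminorms $s_i$ on $\Lie(G/N_i)$ with $|\xi(\cdot)|\le C\max_{1\le i\le k}(s_i\circ\Lie(p_{N_i}))$. As $\Nc$ is a filter basis, choose $N_0\in\Nc$ with $N_0\subseteq N_1\cap\cdots\cap N_k$; then $\Lie(p_{N_i})=\Lie(p_{N_0,N_i})\circ\Lie(p_{N_0})$ for every $i$, so $\xi$ is dominated by a continuous seminorm on $\Lie(G/N_0)$ precomposed with $\Lie(p_{N_0})$. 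Hence $\xi$ vanishes on $\Ker\Lie(p_{N_0})$, and since $\Lie(p_{N_0})$ is surjective, $\xi$ factors as $\xi=\eta_0\circ\Lie(p_{N_0})=\Lie(p_{N_0})^*(\eta_0)$ with $\eta_0\in\Lie(G/N_0)^*$ (continuity of $\eta_0$ being forced by the domination above), which is exactly the asserted membership.

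For part~(ii), the ``if'' direction is immediate: applying Lemma~\ref{O1} to the continuous surjective morphism $p_N\colon G\to G/N$ shows that $\Lie(p_N)^*(\Oc_0)$ is a coadjoint orbit of $G$ whenever $\Oc_0$ is a coadjoint orbit of $G/N$. For the ``only if'' direction, let $\Oc\subseteq\Lie(G)^*$ be a coadjoint $G$-orbit and fix $\xi\in\Oc$. By part~(i) there are $N\in\Nc$ and $\xi_0\in\Lie(G/N)^*$ with $\xi=\Lie(p_N)^*(\xi_0)$; let $\Oc_0\subseteq\Lie(G/N)^*$ be the coadjoint $(G/N)$-orbit through $\xi_0$. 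By Lemma~\ref{O1} the set $\Lie(p_N)^*(\Oc_0)$ is a coadjoint $G$-orbit, and it contains $\Lie(p_N)^*(\xi_0)=\xi\in\Oc$; since two coadjoint orbits of $G$ that share a point coincide, $\Lie(p_N)^*(\Oc_0)=\Oc$.

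The main external ingredient, and the only point requiring care, is the compatibility of $\Lie$ with the limit $G=\varprojlim_{N\in\Nc}G/N$, namely the topological isomorphism $\Lie(G)\cong\varprojlim_{N\in\Nc}\Lie(G/N)$ together with surjectivity of the projections $\Lie(p_N)$; granting this, part~(i) is a soft argument about initial topologies (Hahn--Banach in disguise) and part~(ii) follows formally from Lemma~\ref{O1} and the fact that coadjoint orbits partition $\Lie(G)^*$. If one wishes to avoid invoking surjectivity of $\Lie(p_N)$, it is enough to know that $\Lie(p_N)$ has dense range: this still gives injectivity of $\Lie(p_N)^*$, and in the union formula the functional $\eta_0$, defined and continuous on the range of $\Lie(p_{N_0})$, extends to all of $\Lie(G/N_0)$ by the Hahn--Banach theorem.
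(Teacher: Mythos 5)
Your proposal is correct and follows essentially the same route as the paper: injectivity of $\Lie(p_N)^*$ from surjectivity of $\Lie(p_N)$ (the paper cites \cite[Lemma 4.19]{HM07}), the union formula from the duality between the projective limit $\Lie(G)=\varprojlim_{N}\Lie(G/N)$ and the inductive limit of the duals, and part~(ii) from Lemma~\ref{O1} plus the fact that coadjoint orbits partition $\Lie(G)^*$. The only difference is that where the paper invokes K\"othe \cite[\S 22, no.~6, Th.~(6)]{Koe69} for the inductive-limit description of the dual, you prove that step directly via the initial-topology/seminorm-domination argument, which is a perfectly adequate substitute.
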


\begin{proof}
For every $N\in\Nc$ the map $\Lie(p_N)\colon\Lie(G)\to\Lie(G/N)$ is surjective by
\cite[Lemma 4.19]{HM07},
hence the dual map $\Lie(p_N)^*\colon \Lie(G/N)^*\to\Lie(G)^*$ is injective.
Similarly, if $N_1,N_2\in \Nc$ with $N_1\subseteq N_2$, then the map
$\Lie(p_{N_1,N_2})\colon \Lie(G/N_1)\to \Lie(G/N_2)$ is surjective,
and we thus obtain the projective limit of finite-dimensional Lie algebras $\{\Lie(G/N)\}_{N\in\Nc}$
whose projective limit is the locally convex Lie algebra $\Lie(G)$.
It follows by \cite[\S 22, no. 6, Th. (6)]{Koe69} that $\Lie(G)^*$ is the inductive limit of the inductive system of finite-dimensional vector spaces $\{\Lie(G/N)^*\}_{N\in\Nc}$ connected by the linear maps $\Lie(p_{N_1,N_2})^*\colon \Lie(G/N_2)^*\to \Lie(G/N_1)^*$
for all $N_1,N_2\in \Nc$ with $N_1\subseteq N_2$,
with the canonical maps $\Lie(p_N)^*\colon\Lie(G/N)^*\to\Lie(G)^*$ for all $N\in\Nc$.
This completes the proof of the first assertion.

For the second assertion, it follows by Lemma~\ref{O1} that for every
$N\in\Nc$
and a coadjoint $(G/N)$-orbit $\Oc_0\subseteq\Lie(G/N)^*$, the set $\Lie(p_N)^*(\Oc_0)\subseteq\Lie(G)^*$ is a coadjoint $G$-orbit.
Conversely, let $\Oc\subseteq\Lie(G)^*$ be any coadjoint $G$-orbit.
Select any $\xi\in\Oc$.
Then it follows by the preceding assertion that there exists
$N\in\Nc$ with $\xi\in\Lie(p_N)^*(\Lie(G/N)^*)$, that is, there exists $\xi_0\in\Lie(G/N)^*$
with $\xi=\Lie(p_N)^*(\xi_0)=\xi_0\circ\Lie(p_N)$.
Let $\Oc_0\subseteq\Lie(G/N)^*$ be the coadjoint orbit of $\xi_0$.
It follows by Lemma~\ref{O1} again that the set $\Lie(p_N)^*(\Oc_0)\subseteq\Lie(G)^*$ is a coadjoint $G$-orbit.
But we have $\xi=\Lie(p_N)^*(\xi_0)\in\Lie(p_N)^*(\Oc_0)\cap\Oc$, hence $\Oc=\Lie(p_N)^*(\Oc_0)$,
because two coadjoint orbits are either disjoint from each other, or equal.
This completes the proof.
\end{proof}

We are now in a position to obtain one of the main results of the present paper,
which extends Kirillov's correspondence beyond the category of Lie groups.
Here we use the notation $\Lie(\cdot)^*_{\ZZ}$ for the set of all integral linear functionals
on the dual of the Lie algebra of any connected nilpotent Lie group.
(See Lemma~\ref{O3a} below.)

\begin{theorem}\label{O4}
Let $G$ be a complete topological group with a filter basis $\Nc$ of closed normal subgroups
converging to the identity.
Assume that for every $N\in\Nc$ the quotient $G/N$ is a connected nilpotent Lie group and the topological group $N$ is amenable.

Then there exists a well-defined bijective correspondence
$$\Psi_G\colon \widehat{G}\to \Lie(G)^*/G,\quad [\pi]\mapsto\Oc^\pi$$
between the equivalence classes of unitary irreducible representations of $G$ and the set of all coadjoint $G$-orbits contained in the $G$-invariant set
$$\Lie(G)^*_{\ZZ}:=\{\xi\in\Lie(G)^*\mid (\exists N\in\Nc)(\exists \eta\in\Lie(G/N)^*_\ZZ)\quad \xi=\eta\circ\Lie(p_N)\}.$$
Via the aforementioned correspondence,
every unitary irreducible representation $\pi\colon G\to\Bc(\Hc)$ is associated to the coadjoint $G$-orbit $\Oc^\pi:=\Lie(p_N)^*(\Oc_0)\subseteq\Lie(G)^*_{\ZZ}$, where $N\in\Nc$ and $\Oc_0\subseteq \Lie(G/N)^*_{\ZZ}$
is the coadjoint $(G/N)$-orbit associated with a unitary irreducible representation $\pi_0\colon G/N\to\Bc(\Hc)$
satisfying $\pi_0\circ p_N=\pi$.
\end{theorem}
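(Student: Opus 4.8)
The plan is to reduce every assertion to the level of the connected nilpotent Lie groups $G/N$, where Kirillov's orbit method is available (here in the form recalled in Appendix~\ref{appA}, in particular Lemma~\ref{O3a}, which covers connected, possibly non-simply-connected, nilpotent Lie groups and their integral coadjoint orbits), and then to glue the data along the filter basis $\Nc$ by means of Lemmas~\ref{O1}--\ref{O2} and Corollary~\ref{am2}. Concretely, given $[\pi]\in\widehat{G}$ with $\pi\colon G\to U(\Hc)$ a (strongly) continuous unitary irreducible, hence factor, representation, Corollary~\ref{am2} -- whose amenability hypothesis is precisely that each $N\in\Nc$ is amenable -- produces $N\in\Nc$ and a continuous factor representation $\pi_0\colon G/N\to\Bc(\Hc)$ with $\pi=\pi_0\circ p_N$. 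Since $p_N$ is surjective one has $\pi(G)=\pi_0(G/N)$, so $\pi_0$ takes unitary values and $\pi_0(G/N)'=\pi(G)'=\CC\1$; thus $\pi_0$ is a unitary irreducible representation of the connected nilpotent Lie group $G/N$, to which Kirillov's correspondence attaches a coadjoint $(G/N)$-orbit $\Oc_0\subseteq\Lie(G/N)^*_\ZZ$. One sets $\Oc^\pi:=\Lie(p_N)^*(\Oc_0)$, which is a coadjoint $G$-orbit by Lemma~\ref{O1} and lies in $\Lie(G)^*_\ZZ$ by the very definition of that set.

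The substantive point is well-definedness: $\Oc^\pi$ must not depend on the choice of $N$ (nor of $\pi_0$). If $\pi=\pi_0\circ p_N=\pi_0'\circ p_{N'}$, pick $N''\in\Nc$ with $N''\subseteq N\cap N'$; then $p_N=p_{N'',N}\circ p_{N''}$, and surjectivity of $p_{N''}$ forces $\pi_0\circ p_{N'',N}=\pi_0'\circ p_{N'',N'}=:\pi_0''$, a unitary irreducible representation of $G/N''$. Here I invoke the naturality of Kirillov's correspondence under surjective homomorphisms of connected nilpotent Lie groups (Appendix~\ref{appA}): the coadjoint orbit of $\pi_0\circ p_{N'',N}$ equals $\Lie(p_{N'',N})^*(\Oc_0)$, and similarly for $\pi_0'$. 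Dualizing $\Lie(p_N)=\Lie(p_{N'',N})\circ\Lie(p_{N''})$ gives $\Lie(p_N)^*=\Lie(p_{N''})^*\circ\Lie(p_{N'',N})^*$, so both $\Lie(p_N)^*(\Oc_0)$ and $\Lie(p_{N'})^*(\Oc_0')$ equal $\Lie(p_{N''})^*$ of the Kirillov orbit of $\pi_0''$, hence coincide. This makes $\Psi_G\colon\widehat{G}\to\Lie(G)^*/G$ a well-defined map landing among the orbits contained in $\Lie(G)^*_\ZZ$, and at the same time it verifies the explicit formula in the statement, which is just this construction spelled out.

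For injectivity, if $\Psi_G([\pi])=\Psi_G([\pi'])$, use the filter basis to choose a single $N\in\Nc$ through which both representations factor, $\pi=\pi_0\circ p_N$ and $\pi'=\pi_0'\circ p_N$ with $\pi_0,\pi_0'$ irreducible over $G/N$; by well-definedness $\Lie(p_N)^*(\Oc_0)=\Lie(p_N)^*(\Oc_0')$, and since $\Lie(p_N)^*$ is injective by Lemma~\ref{O2}(i) we get $\Oc_0=\Oc_0'$, so $\pi_0\simeq\pi_0'$ by injectivity of Kirillov's correspondence, whence $\pi\simeq\pi'$. For surjectivity, let $\Oc\subseteq\Lie(G)^*_\ZZ$ be a coadjoint $G$-orbit and pick $\xi\in\Oc$; by definition of $\Lie(G)^*_\ZZ$ there are $N\in\Nc$ and $\eta\in\Lie(G/N)^*_\ZZ$ with $\xi=\Lie(p_N)^*(\eta)$. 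The coadjoint $(G/N)$-orbit $\Oc_0$ of $\eta$ is again contained in $\Lie(G/N)^*_\ZZ$ (invariance of the integrality condition under the coadjoint action, Appendix~\ref{appA}), so Kirillov supplies an irreducible $\pi_0$ over $G/N$ with orbit $\Oc_0$; then $\pi:=\pi_0\circ p_N$ is irreducible over $G$, and well-definedness gives $\Psi_G([\pi])=\Lie(p_N)^*(\Oc_0)$. The latter is a $G$-orbit containing $\Lie(p_N)^*(\eta)=\xi\in\Oc$, so by disjointness of distinct coadjoint orbits $\Psi_G([\pi])=\Oc$.

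I expect the main obstacle to be the well-definedness step, which rests on two inputs about nilpotent Lie groups that must be isolated with care in Appendix~\ref{appA}: that Kirillov's bijection is natural with respect to surjective homomorphisms $L_1\twoheadrightarrow L_2$ of connected (not necessarily simply connected) nilpotent Lie groups, and that this naturality is compatible with the integrality condition -- the latter being the delicate point once nontrivial fundamental groups are allowed. Everything else is bookkeeping with the injective dual maps $\Lie(p_N)^*$ and the filter basis $\Nc$.
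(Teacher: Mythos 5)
Your proposal is correct and follows essentially the same route as the paper's proof: factor $\pi$ through some $G/N$ via Corollary~\ref{am2}, apply the Kirillov correspondence for connected (possibly non-simply-connected) nilpotent Lie groups (Lemma~\ref{O3a}), push the orbit forward by $\Lie(p_N)^*$, and settle well-definedness by the naturality statement (Proposition~\ref{O3b}) applied to the connecting maps $p_{N'',N}$, with injectivity and surjectivity handled through Lemma~\ref{O2}. If anything, your treatment of well-definedness is slightly more explicit than the paper's, which only spells out the nested case $N_1\subseteq N_2$ and leaves the reduction via a common $N''\subseteq N_1\cap N_2$ implicit.
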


\begin{proof}
We recall from Remark~\ref{N0} that
$G=\varprojlim\limits_{N\in \Nc}G/N$, which implies  $\Lie(G)=\varprojlim\limits_{N\in \Nc}\Lie(G/N)$.
Then
$$
\Lie(G)^*_\ZZ=\varinjlim\limits_{N\in \Nc}\Lie(G/N)^*_\ZZ
$$
is realized as an inductive limit of the corresponding sets $\Lie(G/N)^*_{\ZZ}$ where $G/N$ is a connected nilpotent Lie group, and in particular
$$
\Lie(G)^*_\ZZ=\bigcup\limits_{N\in \Nc}\Lie(p_N)^*\Lie(G/N)^*_{\ZZ}
$$
where $\Lie(p_{N})^*:\Lie(G/N)^*_{\ZZ}\rightarrow \Lie(G)^*_\ZZ$ is the dual map of the canonical projection $p_N:G\rightarrow G/N$.
This amounts to the following:
For any coadjoint $G$-orbit $\Oc\subseteq\Lie(G)^*_\ZZ$, there exist $N_0\in\Nc$ and a coadjoint $G/N_0$-orbit
$\Oc_0\subseteq \Lie(G/N_0)^*_{\ZZ}$ such that $\Oc=\Lie(p_{N_0})^*(\Oc_0)$.

Given $\pi$ as in the statement, we use Corollary~\ref{am2}
in order to find $N\in\Nc$ and
$\pi_0\colon G/N\to\Bc(\Hc)$ with $\pi_0\circ p_N=\pi$.
Then, using Lemma~\ref{O3a} for the connected nilpotent Lie group $G/N$,
we find the coadjoint orbit $\Oc_0\subseteq \Lie(G/N)^*_{\ZZ}$ associated with~$\pi_0$.
Now $\Oc^\pi:=\Lie(p_N)^*(\Oc_0)\subseteq \Lie(G)^*_{\ZZ}$ is a coadjoint $G$-orbit by Lemma~\ref{O1}.

We now prove that the coadjoint orbit $\Oc^\pi$ does not depend on the choice of
$N\in\Nc$ and $\pi_0\colon G/N\to\Bc(\Hc)$ with $\pi_0\circ p_N=\pi$, or, equivalently, $N\subseteq\Ker\pi$
(see Theorem~\ref{am2a}). 
To this end, let $N_1,N_2\in\Nc$, $N_1\cup N_2\subseteq\Ker\pi$,
and $\pi_j\colon G/N_j\to\Bc(\Hc)$ with $\pi_j\circ p_{N_j}=\pi$,
and let $\Oc_j\subseteq \Lie(G/{N_j})^*_{\ZZ}$ be the coadjoint orbit corresponding to $\pi_j$ for $j=1,2$. 
We claim that $\Lie(p_{N_1})^*(\Oc_1)=\Lie(p_{N_2})^*(\Oc_2)$. 

To prove that claim, first suppose that $N_1\subseteq N_2$.
Then one has the map $p_{N_1, N_2}:G/N_1\rightarrow G/N_2$, $gN_1\mapsto gN_2$, 
which is a
continuous surjective homomorphism of connected nilpotent Lie groups,
hence $\Lie(p_{N_1,N_2})^*:\Lie(G/N_2)^*_{\ZZ}\rightarrow \Lie(G/N_1)^*_{\ZZ}$ is injective.
Since $p_{N_1, N_2}\circ p_{N_1}=p_{N_2}$, it follows that 
$\pi_2\circ p_{N_1, N_2}=\pi_1$ by the construction from the proof of Theorem~\ref{am2a}. 
Then, by Proposition~\ref{O3b}, we obtain $\Lie(p_{N_1,N_2})^*(\Oc_2)=\Oc_1$.
Using again the equality $p_{N_1,N_2}\circ p_{N_1}=p_{N_2}$,
one has $\Lie(p_{N_1})^*\circ\Lie(p_{N_1,N_2})^*=\Lie(p_{N_2})^*$, hence
$$
\Lie(p_{N_1})^*(\Oc_1)=\Lie(p_{N_1})^*\Lie(p_{N_1, N_2})^*(\Oc_2)=\Lie(p_{N_2})^*(\Oc_2)
$$
as claimed above.

We thus proved that the correspondence $\Psi_G$ from the statement is well defined. 
This correspondence is surjective.
In fact, let $\Oc$ be any coadjoint $G$-orbit.
By Lemma~\ref{O2} there exist $N\in\Nc$ and a coadjoint $G/N$-orbit $\Oc_0$ with $L(p_N)^*(O_0)=\Oc$.
Then, using  Lemma~\ref{O3a} for the nilpotent Lie group $G/N$, we find the unitary representation $\pi_0$ associated to $\Oc_0$.
Now, $\pi=\pi_0\circ p_N$ is a unitary representation of $G$ associated with the coadjoint $G$-orbit $\Oc$.

For injectivity, let $\Oc$ and $\Oc'$ be two coadjoint $G$-orbits. 
By Lemma~\ref{O2} (and its proof), 
there exist $N\in\Nc$ and two $(G/N)$-orbits $\Oc_0$ and $\Oc'_0$ with $\Oc=L(p_N)^*(\Oc_0)$ and $\Oc'=L(p_N)^*(\Oc'_0)$.
Since the map $L(p_N)^*$ is injective, 
one has $\Oc\ne\Oc'$ if and only if $\Oc_0\neq \Oc'_0$, and by Lemma~\ref{O3a}, 
this is further equivalent to the fact that $\Oc_0$ and $\Oc_0'$ are associated respectively with 
inequivalent unitary representations $\pi_0$ and $\pi'_0$, 
and that inequivalence is the same thing with inequivalence of the representations hence $\pi=\pi_0\circ p_N$ and $\pi'_0\circ p_N=\pi'$.
This completes the proof.
\end{proof}

A version of Theorem~\ref{O4} in which the quotients $G/N$ are simply connected is stated in \cite[Satz 2]{Ko82},
however, the corresponding proof seems to be incomplete.

In the setting of Theorem~\ref{O4}, the bijective correspondence from
the equivalence classes of unitary irreducible representations of $G$ to the coadjoint orbits of $G$
can be regarded as a generalized Kirillov correspondence for~$G$.
We also note that, as a direct consequence of Theorem~\ref{O4}, the $G$-invariant set
$\Lie(G)^*_{\ZZ}$ of ``integral functionals'' is actually independent on the choice of
the filter basis $\Nc$ satisfying the conditions from the statement of the theorem.

\begin{corollary}\label{O4_cor1}
If $G$ is a connected locally compact nilpotent group,
then there are a $G$-invariant subset $\Lie(G)^*_{\ZZ}$ and a bijective correspondence
$\Psi_G\colon \widehat{G}\to \Lie(G)^*/G$
onto the set of all coadjoint $G$-orbits contained in $\Lie(G)^*_{\ZZ}$.
Moreover, if $\Nc$ is a filter basis of closed normal subgroups of $G$ converging to  the identity
for which $G/N$ is a Lie group for every $N\in\Nc$, then
$$\Lie(G)^*_{\ZZ}:=\{\xi\in\Lie(G)^*\mid (\exists N\in\Nc)(\exists \eta\in\Lie(G/N)^*_\ZZ)\quad \xi=\eta\circ\Lie(p_N)\}.$$
\end{corollary}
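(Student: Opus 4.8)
The plan is to deduce this from Theorem~\ref{O4} by verifying that a connected locally compact nilpotent group $G$ satisfies all the hypotheses of that theorem for a suitable filter basis $\Nc$. First I would invoke Remark~\ref{ya} (Yamabe's theorem): since $G$ is connected and locally compact, the family $\Nc$ of all compact normal subgroups $N\subseteq G$ for which $G/N$ is a Lie group is a filter basis of closed normal subgroups converging to the identity, and $G=\varprojlim_{N\in\Nc}G/N$ as in Remark~\ref{N0}. Moreover $G$ is complete, being locally compact (cf.\ Remark~\ref{N0}). So the structural hypotheses of Theorem~\ref{O4} are met once we check the two properties of the quotients $G/N$ and the subgroups $N$.

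Next I would check that each $G/N$ is a connected nilpotent Lie group: connectedness is automatic since $G/N$ is a continuous image of the connected group $G$, it is a Lie group by the choice of $\Nc$, and it is nilpotent because nilpotency of $G$ passes to quotients (the lower central series of $G/N$ is the image of that of $G$). Then I would check that each $N$ is an amenable topological group: each $N$ is compact, and every compact group is amenable (its normalized Haar measure furnishes an invariant mean, as recalled in Section~\ref{factor}). With all hypotheses of Theorem~\ref{O4} verified, that theorem yields directly the $G$-invariant set $\Lie(G)^*_{\ZZ}$ and the bijective correspondence $\Psi_G\colon\widehat G\to\Lie(G)^*/G$ onto the coadjoint orbits contained in $\Lie(G)^*_{\ZZ}$, with $\Lie(G)^*_{\ZZ}$ given by the stated formula for \emph{this particular} $\Nc$.

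Finally I would address the last sentence of the statement, which asserts the same formula for \emph{any} filter basis $\Nc'$ of closed normal subgroups converging to the identity with $G/N$ a Lie group for all $N\in\Nc'$. Here I would note that such an $\Nc'$ need not consist of compact subgroups, but connectedness of $G$ forces the quotients $G/N$ to remain connected, and nilpotency is again inherited; what must still be verified is amenability of each $N\in\Nc'$. Since $G$ is amenable (being nilpotent, hence solvable, hence amenable by \cite{Da57}) and $N$ is a closed subgroup of $G$, amenability of $N$ follows from the fact that closed subgroups of amenable locally compact groups are amenable. Thus $\Nc'$ also satisfies the hypotheses of Theorem~\ref{O4}, and the remark following Theorem~\ref{O4}---that $\Lie(G)^*_{\ZZ}$ is independent of the choice of admissible filter basis---gives that the formula with $\Nc'$ defines the same set. \textbf{The main obstacle} I anticipate is the independence statement: one must make sure that the set $\Lie(G)^*_{\ZZ}$ defined via $\Nc'$ literally coincides with the one defined via the Yamabe family $\Nc$, which is exactly the content of the observation after Theorem~\ref{O4} (both equal the domain-of-definition image of the common bijection $\Psi_G$), so the argument reduces to checking that $\Nc'$ is an admissible filter basis in the sense of Theorem~\ref{O4}; the only nontrivial point there is amenability of non-compact closed normal subgroups, handled as above.
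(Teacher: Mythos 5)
Your proposal is correct and follows essentially the same route as the paper: invoke Remark~\ref{ya} to obtain the filter basis, observe that each quotient $G/N$ is a connected nilpotent Lie group, verify amenability of each $N$, and apply Theorem~\ref{O4} together with the observation after it that $\Lie(G)^*_{\ZZ}$ does not depend on the admissible filter basis. The only (immaterial) difference is that the paper justifies amenability of $N$ uniformly by noting that $N$ is nilpotent, hence amenable, whereas you use compactness for the Yamabe family and hereditary amenability of closed subgroups for a general filter basis; both arguments are valid.
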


\begin{proof}
Using Remark~\ref{ya}, there exists a filter basis $\Nc$ of closed normal subgroups of $G$ converging to the identity
for which $G/N$ is a Lie group if $N\in\Nc$.
Moreover, since $G$ is a nilpotent group and is connected,
it follows that for every $N\in\Nc$ the quotient $G/N$ is a connected nilpotent Lie group.
On the other hand, $N$ is also nilpotent, hence amenable.
Thus the assertion follows by Theorem~\ref{O4}.
\end{proof}

\begin{example}\label{sol}
\normalfont
Let us revisit Example~\ref{solenoid} from the present perspective.
Assume that $\gg$ is a finite-dimensional nilpotent Lie algebra
with its corresponding Lie group $\widetilde{G}=(\gg,\cdot)$ defined by the Baker-Campbell-Hausdorff multiplication.
Then $\widetilde{G}$ is a connected, simply connected, nilpotent Lie group, whose exponential map
is the identity map of~$\gg$.
Denote by $\zg$ the center of $\gg$, and let $n:=\dim\zg$,
so that we may assume $\zg=\RR^n$.
Then $\widetilde{Z}=(\zg,+)$ is the center of $\widetilde{G}$,
and let us define the lattice $\Gamma_1:=\ZZ^n\subset\RR^n=\zg$.
Now select any sequence of discrete subgroups
$(\ZZ^n,+)=\Gamma_1\supseteq\Gamma_2\supseteq\cdots$
with $\bigcap\limits_{k\ge 1}\Gamma_k=\{0\}$, and with
the corresponding projective system of Lie groups
$\widetilde{G}/\Gamma_1\to \widetilde{G}/\Gamma_2\to\cdots$.
Define
$$G:=\varprojlim_{k\ge 1}\widetilde{G}/\Gamma_k\hookrightarrow\prod_{k\ge 1}\widetilde{G}/\Gamma_k.$$
It follows by \cite[Lemma and Th. 2]{Cz74} that $G$ is a locally compact group
and one has a canonical isomorphism of Lie algebras
$$\gg\simeq\Lie(G).$$
Moreover, $G$ is nilpotent because it is isomorphic to a closed subgroup of
the topological group $\prod\limits_{k\ge 1}\widetilde{G}/\Gamma_k$,
which is easily seen to be nilpotent.
Consequently, by Corollary~\ref{O4_cor1}, there exists a bijection
$\Psi_G\colon \widehat{G}\to \Lie(G)^*/G$
onto
the set of all coadjoint $G$-orbits contained in the set $\Lie(G)^*_{\ZZ}$ of ``integral functionals'' on~$\Lie(G)$.

We now provide a precise description of $\Lie(G)^*_{\ZZ}$.
For $k=1,2,\dots$ we have $\Lie(\widetilde{G}/\Gamma_k)=\gg$,
hence, by Lemma~\ref{O3a},
$\Lie(\widetilde{G}/\Gamma_k)^*_{\ZZ}:=\{\xi\in\gg^*\mid \xi(\Gamma_k)\subseteq \ZZ\}$.
Thus, by Corollary~\ref{O4_cor1},
$$\Lie(G)^*_{\ZZ}:=\{\xi\in\gg^*\mid (\exists k\ge 1)\quad \xi(\Gamma_k)\subseteq \ZZ\}.$$
\end{example}

\begin{corollary}\label{O4_cor2}
Let $\{G_j\}_{j\in J}$ be any family of connected nilpotent Lie groups,
with their direct product topological group $G:=\prod\limits_{j\in J}G_j$.
Define
$$\Lie(G)^*_{\ZZ}:=\{\xi\in\Lie(G)^*\mid (\exists F\in\Fc)(\exists\eta\in\Lie(G_F)^*_{\ZZ})\quad \xi=\eta\circ\Lie(p_F)\}$$
where $\Fc$ is the set of all finite subsets $F\subseteq J$,
and for every $F\in\Fc$ we define $G_F:=\prod\limits_{j\in F}G_j$ and $p_F\colon G\to G_F$ is the natural projection.
Then $\Lie(G)^*_{\ZZ}$ is a $G$-invariant subset of $\Lie(G)$ and there is a bijective correspondence
$\Psi_G\colon \widehat{G}\to \Lie(G)^*/G$ 
onto the set of all coadjoint $G$-orbits contained in $\Lie(G)^*_{\ZZ}$.
\end{corollary}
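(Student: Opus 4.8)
The plan is to reduce Corollary~\ref{O4_cor2} to Theorem~\ref{O4} by verifying that the direct product $G=\prod_{j\in J}G_j$ together with the filter basis of finite-corank subgroups meets all the hypotheses of that theorem. Concretely, set $\Nc:=\{G^{(F)}\mid F\in\Fc\}$ where $G^{(F)}:=\prod_{j\in J\setminus F}G_j$; this is a filter basis of closed normal subgroups of $G$, it converges to the identity (by the definition of the product topology, every neighborhood of $\1$ contains some $G^{(F)}$), and each quotient $G/G^{(F)}\simeq G_F:=\prod_{j\in F}G_j$ is a connected nilpotent Lie group, being a finite product of such. So the only two remaining points are: (a) $G$ is complete, and (b) each $G^{(F)}$ is amenable as a topological group.

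For completeness, I would note that $G^{(F)}$ is again a direct product of nilpotent (hence locally compact, hence complete) topological groups, and an arbitrary product of complete topological groups is complete — a Cauchy net in the product is coordinatewise Cauchy, each coordinate converges, and the resulting limit in the product works because convergence in the product topology is coordinatewise on the relevant finitely many coordinates appearing in any basic neighborhood. Alternatively, and more in the spirit of the paper, each $G_j$ is a Lie group, hence so is each finite product $G_F$, so $G=\varprojlim_{F\in\Fc}G_F$ is a projective limit of Lie groups and is therefore complete by \cite[Lemma 1.32(iii)]{HM07}, exactly as recorded in Remark~\ref{N0}. For amenability, $G^{(F)}=\prod_{j\in J\setminus F}G_j$ is an infinite direct product of amenable topological groups — each $G_j$ being a connected nilpotent Lie group is solvable, hence amenable by \cite{Da57} — so $G^{(F)}$ is amenable by Proposition~\ref{product}. (The case $J\setminus F$ finite is subsumed, a finite product of amenable groups being amenable.)

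With hypotheses (a) and (b) verified, Theorem~\ref{O4} applies verbatim and yields a $G$-invariant set $\Lie(G)^*_\ZZ$ and a bijection $\Psi_G\colon\widehat G\to\Lie(G)^*/G$ onto the coadjoint orbits contained in it, where by the formula in Theorem~\ref{O4} one has
$$\Lie(G)^*_\ZZ=\{\xi\in\Lie(G)^*\mid(\exists F\in\Fc)(\exists\eta\in\Lie(G/G^{(F)})^*_\ZZ)\ \xi=\eta\circ\Lie(p_{G^{(F)}})\}.$$
It then remains only to identify this with the set displayed in the statement of the corollary, which amounts to the canonical isomorphism $G/G^{(F)}\simeq G_F$ together with the identification of $p_{G^{(F)}}$ with the projection $p_F\colon G\to G_F$; under these, $\Lie(p_{G^{(F)}})$ becomes $\Lie(p_F)$ and $\Lie(G/G^{(F)})^*_\ZZ$ becomes $\Lie(G_F)^*_\ZZ$, giving exactly the formula in the corollary. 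Finally $\Lie(G)^*_\ZZ$ is $G$-invariant because it is a union of coadjoint orbits (each $\Lie(p_F)^*(\Oc_0)$ for $\Oc_0$ a coadjoint orbit in $\Lie(G_F)^*_\ZZ$ is a coadjoint $G$-orbit by Lemma~\ref{O1}).

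The main obstacle is entirely localized in invoking Proposition~\ref{product} for the non-locally-compact subgroups $G^{(F)}$ — everything else is a bookkeeping translation between the abstract projective-limit setup of Theorem~\ref{O4} and the concrete product notation of the corollary. One subtlety worth a sentence in the writeup: when $J$ is uncountable the groups $G^{(F)}$ and $G$ itself need not be locally compact or metrizable, so one must be careful to use only the completeness argument via \cite{HM07} (valid for arbitrary projective limits of Lie groups) rather than any argument specific to the countable or locally compact case; but since Theorem~\ref{O4} and Proposition~\ref{product} are both stated without countability restrictions, no difficulty arises.
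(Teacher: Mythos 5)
Your proposal is correct and follows essentially the same route as the paper: realize $G_F\simeq G/G^{(F)}$, verify that $\Nc=\{G^{(F)}\mid F\in\Fc\}$ is a filter basis of closed normal subgroups converging to the identity with each $G^{(F)}$ amenable via Proposition~\ref{product}, and apply Theorem~\ref{O4}. Your explicit verification of completeness (via $G=\varprojlim_{F}G_F$ and \cite[Lemma 1.32(iii)]{HM07}) is a detail the paper leaves implicit, but it is the right justification and does not change the argument.
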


\begin{proof}
For every $F\in\Fc$ the group
$G^{(F)}:=\prod\limits_{j\in J\setminus F}G_j$ is amenable by Proposition~\ref{product}
and can be naturally regarded as a closed normal subgroup of $G$.
Moreover,
one has the canonical isomorphism $G_F\simeq G/G^{(F)}$,
and $p_F$ can be identified with
the quotient map $p_{G^{(F)}}\colon G\to G/G^{(F)}$.
Since the group $G_F$ is a connected nilpotent Lie group, we may apply Theorem~\ref{O4},
and we are done.
\end{proof}

\begin{example}
\normalfont
Here we illustrate Corollary~\ref{O4_cor2} by the simplest example, which already shows that
the usual $C^*$-algebraic approach to group representation theory breaks down for infinite direct products
of non-compact locally compact groups, hence the description of their unitary duals
in terms of coadjoint orbits is particularly important whenever it is available,
as it is the only description known so far.

Specifically, let $J:=\NN=\{0,1,2,\dots\}$ and $G_j=(\RR,+)$ for every $j\in J$.
Then $G=(\RR^\NN,+)$, $\Lie(G)=\RR^\NN$ is the vector space of all sequences of real numbers,
and $\Lie(G)^*=\RR^{(\NN)}$ is the vector space of all finitely supported sequences of real numbers,
and the coadjoint orbits of $G$ can be identified with the points of $\Lie(G)^*$ since $G$ is an abelian group.
It thus follows by Corollary~\ref{O4_cor2} or by direct verification that there exists
a bijection $\Psi_G\colon\widehat{G}\to\RR^{(\NN)}$.
However, as the vector space $\RR^{(\NN)}$ is infinite dimensional, it is not locally compact,
hence it is not homeomorphic to the spectrum of any $C^*$-algebra.
Consequently, the irreducible representation theory of $G$ can be described via coadjoint orbits
but not via any $C^*$-algebra.

See however \cite{Gr05}, \cite{GN13}, and the references therein
for an interesting $C^*$-algebraic approach to representation theory of topological groups
that are not locally compact.
\end{example}

\appendix

\section{Complements on representations of nilpotent Lie groups}
\label{appA}

Our basic references for representation theory of nilpotent Lie groups are 
\cite{Ki62} and \cite{CG90}.
In this section we record a few results that are needed in the main body of our paper. 
For the reader's convenience we provide self-contained proofs for some of the results 
that we found more difficult to locate in the literature. 

\begin{lemma}\label{O3}
Let $p\colon G_1\to G_2$ be a continuous surjective morphism of connected and simply connected nilpotent Lie groups.
Let $\pi_2\colon G_2\to\Bc(\Hc)$ be any unitary irreducible representation of $G_2$,
associated with a coadjoint orbit $\Oc_2\subseteq\Lie(G_2)^*$. 
Then $\pi_1:=\pi_2\circ p \colon G_1\to\Bc(\Hc)$is a unitary irreducible representation of $G_1$,
and its corresponding coadjoint orbit is  $\Oc_1:=\Lie(p)^*(\Oc_2)\subseteq\Lie(G_1)^*$.
\end{lemma}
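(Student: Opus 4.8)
Before turning to the identification of the orbit, note that $\pi_1=\pi_2\circ p$ is a continuous unitary representation of $G_1$, and that since $p$ is surjective one has $\pi_1(G_1)=\pi_2(G_2)$ as subsets of $\Bc(\Hc)$; hence $\pi_1(G_1)'=\pi_2(G_2)'$, so $\pi_1$ is irreducible because $\pi_2$ is. The plan for the rest is to reduce everything to the standard Kirillov picture for connected, simply connected nilpotent Lie groups (\cite{Ki62}, \cite{CG90}): realize $\pi_2$ as induced from a unitary character and transport the polarization through $p$.

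Write $\gg_j:=\Lie(G_j)$ and $\varphi:=\Lie(p)\colon\gg_1\to\gg_2$; this is a surjective homomorphism of nilpotent Lie algebras with $p\circ\exp=\exp\circ\,\varphi$ (both groups being exponential), and since $G_2$ is simply connected its kernel $\ker p=\exp(\ker\varphi)$ is connected with $G_1/\ker p\cong G_2$. Fix $\xi_2\in\Oc_2$ and a polarization $\hg_2\subseteq\gg_2$ at $\xi_2$, so that $\pi_2\cong\mathrm{Ind}_{H_2}^{G_2}\chi_{\xi_2}$ with $H_2:=\exp\hg_2$ and $\chi_{\xi_2}(\exp X)=\ee^{\,\ie\xi_2(X)}$. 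Put $\xi_1:=\Lie(p)^*(\xi_2)=\xi_2\circ\varphi\in\gg_1^*$ and $\hg_1:=\varphi^{-1}(\hg_2)$. Because $\varphi$ is a surjective homomorphism, $\hg_1$ is a subalgebra with $\varphi(\hg_1)=\hg_2$ and $\ker\varphi\subseteq\hg_1$, and the subordination $\xi_1([\hg_1,\hg_1])=\xi_2([\hg_2,\hg_2])=\{0\}$ is immediate. For the maximality condition I would check the key identity $\gg_1(\xi_1)=\varphi^{-1}(\gg_2(\xi_2))$ for the radicals of the skew forms $(X,Y)\mapsto\xi_j([X,Y])$, which again follows from surjectivity of $\varphi$; then
$$\dim\hg_1=\dim\hg_2+\dim\ker\varphi=\tfrac12\bigl(\dim\gg_2+\dim\gg_2(\xi_2)\bigr)+\dim\ker\varphi=\tfrac12\bigl(\dim\gg_1+\dim\gg_1(\xi_1)\bigr),$$
which is exactly the dimension of a polarization at $\xi_1$. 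Hence $\hg_1$ is a polarization at $\xi_1$, and $H_1:=\exp\hg_1=p^{-1}(H_2)$.

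It remains to see that induction is compatible with $p$. Since $\xi_1=\xi_2\circ\varphi$ one has $\chi_{\xi_1}=\chi_{\xi_2}\circ(p|_{H_1})$, so $\chi_{\xi_1}$ is trivial on the (normal in $G_1$) subgroup $\ker p\subseteq H_1$; therefore $\mathrm{Ind}_{H_1}^{G_1}\chi_{\xi_1}$ is trivial on $\ker p$, factors through $G_1/\ker p\cong G_2$, and is there canonically identified with $\mathrm{Ind}_{H_2}^{G_2}\chi_{\xi_2}=\pi_2$. Thus $\pi_1=\pi_2\circ p\cong\mathrm{Ind}_{H_1}^{G_1}\chi_{\xi_1}$, and by Kirillov's theorem for $G_1$ the orbit attached to $\pi_1$ is $\Ad_{G_1}^*(G_1)\xi_1:=\{\Ad_{G_1}^*(g)\xi_1\mid g\in G_1\}$. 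Finally, the commutative square established in the proof of Lemma~\ref{O1} gives $\Lie(p)\circ\Ad_{G_1}(g)=\Ad_{G_2}(p(g))\circ\Lie(p)$ for all $g\in G_1$; dualizing and using that $p$ is surjective yields
$$\Ad_{G_1}^*(G_1)\xi_1=\Lie(p)^*\bigl(\Ad_{G_2}^*(G_2)\xi_2\bigr)=\Lie(p)^*(\Oc_2)=\Oc_1,$$
as asserted. The two steps that genuinely need care are the radical identity $\gg_1(\xi_1)=\varphi^{-1}(\gg_2(\xi_2))$ together with the ensuing dimension count (so that $\hg_1$ is a bona fide polarization), and the "induction commutes with inflation along $p$" step; both are standard but must be spelled out in the nilpotent setting.
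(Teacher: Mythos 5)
Your proposal is correct and follows essentially the same route as the paper's proof: pull back the polarization via $\hg_1=\Lie(p)^{-1}(\hg_2)$, verify the radical identity $\Lie(G_1)(\xi_1)=\Lie(p)^{-1}(\Lie(G_2)(\xi_2))$ and the resulting dimension count, and then identify $\mathrm{Ind}_{H_1}^{G_1}\chi_{\xi_1}$ with $\pi_2\circ p$ (the paper does this last step by writing out the explicit unitary bijection $\varphi\mapsto\varphi\circ p$ between the spaces $\Ci(G_2:H_2)$ and $\Ci(G_1:H_1)$, which is exactly the ``induction commutes with inflation'' step you flag as needing to be spelled out). No substantive differences.
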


\begin{proof}
It follows by Lemma~\ref{O1} that $\Oc_1$ is a coadjoint orbit of~$G_1$.

Let $\ell_2\in\Oc_2$ and $\ell_1=\Lie (p)^*(\ell_2)$. 
To construct $\pi_2$, we select a real polarization $\hg_2$ in $\ell_2$, i.e $\hg_2\subset \Lie(G_2)$ such that:
\begin{itemize}
\item[1.] $\hg_2$ is a subalgebra of $\Lie(G_2)$,
\item[2.] $\hg_2$ is subordinated to $\ell_2$, i.e $\langle\ell_2,[\hg_2,\hg_2]\rangle=0$,
\item[3.] $\hg_2$ is a maximal isotropic subspace, i.e.,
$$
\dim\hg_2=\frac{1}{2}\dim\Lie(G_2)+\frac{1}{2}\dim\Lie(G_2)(\ell_2).
$$
\end{itemize}

We denote by $H_2=\exp\hg_2$ the analytic subgroup with Lie algebra $\hg_2$. 
The exponential of the character $i\ell_2$ of $\hg_2$ is a character $\chi_{\ell_2}$ of $H_2$, defined by:
$$
\chi_{\ell_2}(\exp~X)=e^{i\langle \ell_2,X\rangle}\qquad (X\in\hg_2).
$$
The representation $\pi_2$ is an induced unitary representation:
$$
\pi_2=\text{Ind}_{H_2}^{G_2}~\chi_{\ell_2}.
$$
It is realized in the completion of the space $\Ci(G_2:H_2)$ of $C^\infty$ functions $\varphi$ on $G_2$, with compact support modulo $H_2$, 
satisfying 
$$
\varphi(xh)=\chi_{\ell_2}(h)^{-1}\varphi(x)\quad\text{ for }x\in G_2,\ h\in H_2,\quad\text{and}\quad
\Vert \varphi\Vert^2=\int_{G_2/H_2}|\varphi(\dot{x})|^2~d\dot{x}
$$
The representation $\pi_2$ is defined by:
$$
\left(\pi_2(x)\varphi\right)(y)=\varphi(x^{-1}y),\quad
\text{ for }\varphi\in\Ci(G_2:H_2).$$
First, the Lie algebra of $G_2=p(G_1)$ is $\Lie(G_2)= \Lie(p)(\Lie(G_1))$, hence $\Lie(p)$ is surjective.
Moreover
$$\aligned
\Lie(G_1)(\ell_1)
:=&\{X\in \Lie(G_1)\mid \ad^*(X)\ell_1=0\} \\
=&\{X\in \Lie(G_1)\mid (\forall Y\in\Lie(G_1))\ \langle (\Lie p)^*(\ell_2),[X,Y]\rangle=0\}\\
=&\{X\in \Lie(G_1)\mid (\forall Y\in\Lie(G_1))\ \langle \ell_2,[\Lie(p)(X),\Lie(p)(Y)]\rangle=0\}\\
=&\{X\in\Lie(G_1)\mid \Lie(p)(X)\in\Lie(G_2)(\ell_2)\}\\
=&(\Lie(p))^{-1}(\Lie(G_2)(\ell_2)).
\endaligned
$$
This implies that $\Ker\Lie (p)\subset\Lie (G_1)(\ell_1)$,
$\Lie(p)|_{\Lie(G_1)(\ell_1)}\colon\Lie(G_1)(\ell_1) \rightarrow \Lie (G_2)(\ell_2)$ is surjective, and
$$
\dim\Lie(G_2)(\ell_2)+\dim(\Ker \Lie(p))=\dim(\Lie(G_1)(\ell_1)).
$$
Then, we put $\hg_1:=(\Lie(p))^{-1}(\hg_2)$.
Since $\hg_2$ is a maximal isotropic subspace, one has $\Lie(G_2)(\ell_2)\subset\hg_2$, hence
$$
\Ker \Lie(p)\subset\Lie(G_1)(\ell_1)=(\Lie(p))^{-1}\Lie(G_2)(\ell_2)\subset(\Lie(p))^{-1}\hg_2=\hg_1.
$$
The same argument gives
$$
\dim \hg_2+\dim\Ker \Lie(p)=\dim\hg_1, \quad\dim\Lie(G_2)+\dim\Ker \Lie(p)=\dim\Lie(G_1).
$$
Now $\hg_1$ is a polarization in $\ell_1$. In fact:
\begin{itemize}
\item[1.] $\hg_1$ is a subalgebra of $\Lie(G_1)$: For all $X$, $Y\in\hg_1$,
$$
\Lie(p)([X,Y])=[\Lie(p)(X),\Lie(p)(Y)]\in[\hg_2,\hg_2]\subset\hg_2,
$$
then $[X,Y]\in\hg_1$.
\item[2.] $\hg_1$ is subordinated to $\ell_1$:
$$\aligned
\langle\ell_1,[\hg_1,\hg_1]\rangle&=\langle \Lie(p)^*\ell_2,[\hg_1,\hg_1]\rangle=\langle \ell_2,\Lie(p)([\hg_1,\hg_1])\rangle\\
&=\langle \ell_2,[\Lie(p)(\hg_1),\Lie(p)(\hg_1)]\rangle\subset\langle \ell_2,[\hg_2,\hg_2]\rangle=0.
\endaligned
$$
\item[3.] $\hg_1$ is  a maximal isotropic subspace:
$$\aligned
\dim\hg_1&=\dim\hg_2+\dim\Ker \Lie(p)\\
&=\frac{1}{2}\big(\dim\Lie(G_2)+\dim \Ker \Lie(p)+\dim\Lie(G_2)(\ell_2)+\dim\Ker \Lie(p)\big)\\
&=\frac{1}{2}\big(\dim\Lie(G_1)+\dim\Lie(G_1)(\ell_1)\big).
\endaligned
$$
\end{itemize}

Then the representation associated to the coadjoint orbit $\Oc_1$ is (with the same notation as above):
$$
\rho_1=\text{Ind}_{H_1}^{G_1}~\chi_{\ell_1}.
$$
Note that as $\Ker \Lie(p)\subset \hg_1$, one has $\Ker p=\exp(\Ker \Lie(p))\subset \exp(\hg_1)=H_1$.
Hence, $\forall\psi\in\Ci(G_1:H_1)$, $x\in G_1$ and $k\in\Ker p$, if $k=\exp X$ then
$$
\begin{aligned}
\psi(xk)=\chi_{\ell_1}(k)^{-1}\psi(x)
&=\ee^{-\ie\langle \ell_1,X\rangle}\psi(x)
= \ee^{-\ie\langle \Lie(p)^* \ell_2,X\rangle}\psi(x) \\
&= \ee^{-\ie\langle \ell_2,\Lie(p)(X)\rangle}\psi(x)
=\psi(x).
\end{aligned}
$$
The function $\psi$ passes to the quotient, i.e.,
there exists $\varphi\colon G_2=G_1/\Ker p\to\CC$ such that $\varphi\circ p=\psi$.
Moreover, if $h=\exp X\in H_2$, then $X\in\hg_2=\Lie(p)(\hg_1)$, $X=\Lie(p)(Y)$, for each $x=p(y)$ in $G_2$, we have:
$$
\varphi(xh)=\varphi(p(y)p(\exp Y))=\psi(y\exp Y)=\ee^{-\ie\langle \ell_1,Y\rangle}\psi(y)
=\ee^{-\ie\langle \ell_2,X\rangle}\varphi(x).
$$
In other words, $\varphi\in\Ci(G_2:H_2)$.
Conversely, if $\varphi\in\Ci(G_2:H_2)$,
then $\varphi\circ p\in\Ci(G_1:H_1)$.
Thus, there is a linear bijection $\Ci(G_2:H_2)\to\Ci(G_1:H_1)$, 
$\varphi\mapsto \varphi\circ p$.

On the other hand, we have:
$$
G_2/H_2\simeq(G_1/\Ker p)/(H_1/\Ker p)\simeq G_1/H_1,
$$
and
$$
\Vert\psi\Vert^2=\int_{G_1/H_1}|\psi(\dot{x})|^2~d\dot{x}=\int_{G_2/H_2}|\varphi(\dot{x})|^2~d\dot{x}=\Vert \varphi\Vert^2.
$$
The bijection above extends uniquely to a unitary 
operator $\Hc_{\pi_2}\to\Hc_{\rho_1}$.

Note that, if $x,~y\in G_1$
$$\aligned
\big(\rho_1(x)U\varphi\big)(y)
&=(U\varphi)(x^{-1}y)=(\varphi\circ p)(x^{-1}y)
=\varphi(p(x)^{-1}p(y)) \\
&=\Big((\pi_2\circ p)(x)(\varphi\circ p)\Big)(y)
=\Big((\pi_2\circ p)(x)(U\varphi)\Big)(y),
\endaligned
$$
that is,
$\rho_1(x)=(\pi_2\circ p)(x)=\pi_1(x)$, $\rho_1=\pi_1$,
and this completes the proof.
\end{proof}

\begin{lemma}\label{O3a}
Let $G$ be any connected and simply connected nilpotent Lie group with a discrete normal subgroup $\Gamma\subseteq G$,
and denote by $p\colon G\to G/\Gamma$ the quotient map.
Denote by $\log_G\colon\gg\to G$ the inverse of the exponential map of~$G$.
Then for every unitary irreducible representation $\pi\colon G/\Gamma\to \Bc(\Hc)$ the representation
$\pi\circ p \colon G\to \Bc(\Hc)$ is irreducible, and its corresponding coadjoint orbit $\Oc\subseteq\gg^*$
has the property that for all $\xi\in\Oc$ one has $\xi(\log_G(\Gamma))\subseteq \ZZ$.
One thus obtains a bijective correspondence from $\widehat{G/\Gamma}$
onto the set of all coadjoint $G$-orbits contained in the $G$-invariant set
$\gg^*_{\ZZ}:=\{\xi\in\gg^*\mid \xi(\log_G(\Gamma))\subseteq \ZZ\}$.
\end{lemma}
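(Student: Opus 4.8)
The plan is to reduce the statement to the ordinary Kirillov theory for the simply connected nilpotent group $G$, using one elementary structural fact: since $\Gamma$ is a \emph{discrete normal} subgroup of the connected group $G$, it is automatically \emph{central}. Indeed, for fixed $\gamma\in\Gamma$ the continuous map $g\mapsto g\gamma g^{-1}$ takes values in the discrete set $\Gamma$ on the connected space $G$, hence is constant and equal to $\gamma$, so $\gamma\in Z(G)$. Consequently $\Lambda:=\log_G(\Gamma)$ is contained in the center $\zg$ of $\gg$, and since the Baker--Campbell--Hausdorff product is additive on the center, $\log_G$ restricts to a group isomorphism $(Z(G),\cdot)\to(\zg,+)$; thus $\Lambda$ is in fact a discrete subgroup of $\zg$. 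I also record at the outset that $\Gamma$, being discrete, is closed, that $G/\Gamma$ is a connected nilpotent Lie group, and that $\Lie(p)\colon\gg\to\Lie(G/\Gamma)$ is an isomorphism (this is why $\gg^*$ rather than $\Lie(G/\Gamma)^*$ appears in the statement).

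First I would identify $\widehat{G/\Gamma}$ with a subset of $\widehat{G}$. As $p\colon G\to G/\Gamma$ is a continuous surjection, for any unitary representation $\pi$ of $G/\Gamma$ the representations $\pi$ and $\pi\circ p$ have the same range of operators, hence the same commutant; therefore $[\pi]\mapsto[\pi\circ p]$ is a bijection of $\widehat{G/\Gamma}$ onto $\{[\sigma]\in\widehat{G}\mid\sigma|_\Gamma=\1\}$, with inverse sending an irreducible $\sigma$ trivial on $\Gamma$ to the representation of $G/\Gamma$ through which it factors. In particular $\pi\circ p$ is irreducible, which is the first claim of the lemma.

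The core step is to describe, among the Kirillov orbits $\Oc\in\gg^*/G$ of the simply connected group $G$, those whose associated irreducible representation $\sigma_\Oc$ is trivial on $\Gamma$. Fix $\ell\in\Oc$ and $\gamma=\exp X\in\Gamma$, so $X\in\Lambda\subseteq\zg$. Since $\gamma$ is central, $\sigma_\Oc(\gamma)$ commutes with $\sigma_\Oc(G)$, hence is a scalar by Schur's lemma. Realizing $\sigma_\Oc$ as the induced representation $\mathrm{Ind}_{H}^{G}\chi_\ell$ from a real polarization $\hg$ at $\ell$ --- as recalled in the proof of Lemma~\ref{O3}, and using that any maximal isotropic subspace contains the radical $\Lie(G)(\ell)$, hence contains $\zg$ --- one reads off directly from $(\sigma_\Oc(g)\varphi)(y)=\varphi(g^{-1}y)$ and the $H$-equivariance of $\varphi$ that $\sigma_\Oc(\exp X)=\chi_\ell(\exp X)\,\1$; with the Kirillov correspondence normalized so that $\chi_\ell(\exp X)=\ee^{2\pi\ie\langle\ell,X\rangle}$ (the normalization for which the present statement and Example~\ref{sol} read with $\ZZ$), this scalar equals $\ee^{2\pi\ie\langle\ell,X\rangle}$. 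It depends only on $\Oc$, not on the choice of $\ell\in\Oc$, since for $X\in\zg$ one has $\langle\Ad_G^*(g)\ell,X\rangle=\langle\ell,\Ad_G(g^{-1})X\rangle=\langle\ell,X\rangle$ because $\Ad_G$ fixes $\zg$ pointwise; the same identity shows that $\gg^*_{\ZZ}:=\{\xi\in\gg^*\mid\xi(\Lambda)\subseteq\ZZ\}$ is $\Ad_G^*(G)$-invariant. Hence $\sigma_\Oc|_\Gamma=\1$ if and only if $\langle\ell,X\rangle\in\ZZ$ for all $X\in\Lambda$, that is, if and only if $\Oc\subseteq\gg^*_{\ZZ}$, in which case automatically $\xi(\log_G(\Gamma))\subseteq\ZZ$ for every $\xi\in\Oc$.

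Putting the pieces together, the Kirillov bijection $\widehat{G}\to\gg^*/G$ restricts to a bijection from $\{[\sigma]\in\widehat{G}\mid\sigma|_\Gamma=\1\}$ onto the set of coadjoint $G$-orbits contained in $\gg^*_{\ZZ}$; composing it with the bijection $\widehat{G/\Gamma}\cong\{[\sigma]\in\widehat{G}\mid\sigma|_\Gamma=\1\}$ of the second paragraph gives the desired bijective correspondence $[\pi]\mapsto\Oc^{\pi\circ p}$, together with all asserted properties. The step I expect to need the most care is the explicit identification of the central character of $\sigma_\Oc$ in the induced model, together with keeping the normalization constant consistent across the paper; the remaining ingredients are either the standard fact that a discrete normal subgroup of a connected group is central, or a direct transcription of Kirillov's theorem for the simply connected group $G$.
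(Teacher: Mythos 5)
Your argument is correct, but it is worth noting that the paper does not actually prove this lemma: its ``proof'' consists of the citations \cite[Th. 8.1]{Ki62} and \cite[Th. 4.4.2]{CG90}, which contain exactly the statement (irreducible representations of $G/\Gamma$ correspond to the Kirillov orbits of $G$ that are integral on $\log_G(\Gamma)$). So you have supplied a self-contained proof where the authors deferred to the literature. Your route is the standard one and all the ingredients check out: discreteness plus normality plus connectedness forces $\Gamma\subseteq Z(G)$, hence $\Lambda=\log_G(\Gamma)$ is a discrete subgroup of $\zg$; the commutant argument identifies $\widehat{G/\Gamma}$ with the classes of irreducible representations of $G$ trivial on $\Gamma$; the polarization $\hg$ contains the radical $\gg(\ell)\supseteq\zg$ of $B_\ell$, so $\Gamma\subseteq\exp\zg\subseteq H$ and the induced-representation model gives $\sigma_{\Oc}(\exp X)=\chi_\ell(\exp X)\,\1$ for $X\in\Lambda$; and $\Ad_G$ fixing $\zg$ pointwise makes both the central character and the set $\gg^*_{\ZZ}$ well defined and $G$-invariant. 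You are also right to flag the normalization: the paper's own Lemma~\ref{O3} uses $\chi_{\ell}(\exp X)=\ee^{\ie\langle\ell,X\rangle}$, under which triviality on $\Gamma$ would read $\xi(\Lambda)\subseteq 2\pi\ZZ$ rather than $\ZZ$, so the statement of Lemma~\ref{O3a} (and Example~\ref{sol}) implicitly assumes the $\ee^{2\pi\ie\langle\ell,X\rangle}$ convention; making that choice explicit, as you do, is the honest way to reconcile the two. The one cosmetic slip is that $\log_G$ is a map $G\to\gg$, not $\gg\to G$ as the statement's wording suggests, but that is the paper's typo, not yours.
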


\begin{proof}
See \cite[Th. 8.1]{Ki62} and \cite[Th. 4.4.2]{CG90}.
\end{proof}

Now we can prove the following generalization of Lemma~\ref{O3}
to connected nilpotent Lie groups that may not be simply connected.

\begin{proposition}\label{O3b}
Let $p\colon G_1\to G_2$ be a continuous surjective morphism of connected nilpotent Lie groups.
Let $\pi_2\colon G_2\to\Bc(\Hc)$ be any unitary irreducible representation of $G_2$,
associated with a coadjoint orbit $\Oc_2\subseteq\Lie(G_2)^*$. 
Then $\pi_1:=\pi_2\circ p \colon G_1\to\Bc(\Hc)$is a unitary irreducible representation of $G_1$,
and its corresponding coadjoint orbit is  $\Oc_1:=\Lie(p)^*(\Oc_2)\subseteq\Lie(G_1)^*$.
\end{proposition}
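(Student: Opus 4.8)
The plan is to reduce the statement to the simply connected case treated in Lemma~\ref{O3}, by passing to universal covering groups and tracking the Kirillov data through Lemma~\ref{O3a}. For $i=1,2$ let $q_i\colon\widetilde G_i\to G_i$ denote the universal covering homomorphism; then $\widetilde G_i$ is a connected, simply connected nilpotent Lie group, $\Gamma_i:=\Ker q_i$ is a discrete central subgroup with $G_i\cong\widetilde G_i/\Gamma_i$, and $\Lie(q_i)\colon\Lie(\widetilde G_i)\to\Lie(G_i)$ is an isomorphism. Since $\widetilde G_1$ is simply connected, the Lie algebra homomorphism $\Lie(q_2)^{-1}\circ\Lie(p)\circ\Lie(q_1)$ integrates to a unique continuous homomorphism $\widetilde p\colon\widetilde G_1\to\widetilde G_2$, and a comparison of differentials shows $q_2\circ\widetilde p=p\circ q_1$.

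The first point I would verify is that $\widetilde p$ is \emph{surjective}, so that Lemma~\ref{O3} applies to it. Its differential $\Lie(\widetilde p)=\Lie(q_2)^{-1}\circ\Lie(p)\circ\Lie(q_1)$ is surjective because $\Lie(p)$ is (recall $p$ is a surjective morphism of connected Lie groups, as already used in the proof of Lemma~\ref{O3}) and $\Lie(q_1),\Lie(q_2)$ are isomorphisms. Since the exponential map of the simply connected nilpotent group $\widetilde G_2$ is a diffeomorphism and $\widetilde p\circ\exp_{\widetilde G_1}=\exp_{\widetilde G_2}\circ\Lie(\widetilde p)$, we obtain $\widetilde p(\widetilde G_1)\supseteq\exp_{\widetilde G_2}\bigl(\Lie(\widetilde p)(\Lie(\widetilde G_1))\bigr)=\exp_{\widetilde G_2}(\Lie(\widetilde G_2))=\widetilde G_2$, as wanted.

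Next I would transport $\pi_2$ upstairs. By Lemma~\ref{O3a} applied to the quotient map $q_2\colon\widetilde G_2\to\widetilde G_2/\Gamma_2\cong G_2$, the representation $\widetilde\pi_2:=\pi_2\circ q_2$ is a unitary irreducible representation of $\widetilde G_2$ whose associated coadjoint orbit, viewed in $\Lie(\widetilde G_2)^*$, is $\Lie(q_2)^*(\Oc_2)$. Applying Lemma~\ref{O3} to the surjection $\widetilde p$ and to $\widetilde\pi_2$, the representation $\widetilde\pi_2\circ\widetilde p$ of $\widetilde G_1$ is irreducible with coadjoint orbit $\Lie(\widetilde p)^*\bigl(\Lie(q_2)^*(\Oc_2)\bigr)$. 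Now $\widetilde\pi_2\circ\widetilde p=\pi_2\circ(q_2\circ\widetilde p)=\pi_2\circ(p\circ q_1)=\pi_1\circ q_1$, so $\pi_1\circ q_1$ is irreducible; since $q_1$ is surjective, $\pi_1$ and $\pi_1\circ q_1$ have the same invariant closed subspaces, hence $\pi_1=\pi_2\circ p$ is irreducible as well.

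It remains to identify the coadjoint orbit of $\pi_1$. By Lemma~\ref{O3a} applied this time to $q_1\colon\widetilde G_1\to\widetilde G_1/\Gamma_1\cong G_1$ and to the irreducible representation $\pi_1$ of $G_1$, the coadjoint $\widetilde G_1$-orbit of $\pi_1\circ q_1$ equals $\Lie(q_1)^*(\Oc_1')$, where $\Oc_1'\subseteq\Lie(G_1)^*$ is the coadjoint orbit associated with $\pi_1$. On the other hand, functoriality of $\Lie(\cdot)$ applied to $q_2\circ\widetilde p=p\circ q_1$ gives $\Lie(\widetilde p)^*\circ\Lie(q_2)^*=\Lie(q_1)^*\circ\Lie(p)^*$, so the orbit of $\pi_1\circ q_1$ computed in the previous paragraph is also $\Lie(q_1)^*\bigl(\Lie(p)^*(\Oc_2)\bigr)$. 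Since $\Lie(q_1)^*$ is injective, we conclude $\Oc_1'=\Lie(p)^*(\Oc_2)=\Oc_1$ (which is indeed a coadjoint $G_1$-orbit, by Lemma~\ref{O1}). I expect the main obstacle to be bookkeeping rather than substance: one must invoke Lemma~\ref{O3a} with exactly the right quotient maps so that the Kirillov orbit of a representation of $G_i$ is correctly matched, under the isomorphism $\Lie(q_i)$, with the orbit of its pullback to $\widetilde G_i$; the only input beyond Lemmas~\ref{O3} and~\ref{O3a} that is not purely formal is the surjectivity of $\widetilde p$, which rests on the surjectivity of the exponential map of a simply connected nilpotent Lie group.
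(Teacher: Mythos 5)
Your argument is correct and follows essentially the same route as the paper's proof: pass to the universal covers, lift $p$ to $\widetilde p$ with $q_2\circ\widetilde p=p\circ q_1$, apply Lemma~\ref{O3} upstairs and Lemma~\ref{O3a} to match orbits across the covering maps, then conclude via $\Lie(\widetilde p)^*\circ\Lie(q_2)^*=\Lie(q_1)^*\circ\Lie(p)^*$ and injectivity of $\Lie(q_1)^*$. The only (welcome) addition is your explicit verification that $\widetilde p$ is surjective via the exponential map, a point the paper leaves implicit.
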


\begin{proof}
It follows by Lemma~\ref{O1} that $\Oc_1$ is a coadjoint orbit of~$G_1$.
Let $\widetilde{G}_j$ be the universal covering group of $G_j$,
with a suitable discrete central subgroup $\Gamma_j$ and the quotient map
$p_j\colon\widetilde{G}_j\to \widetilde{G}_j/\Gamma_j=G_j$.
Since $\widetilde{G}_1$ is simply connected,
it follows that the Lie group morphism $p\circ p_1\colon \widetilde{G}_1\to G_2$
lifts to a unique Lie group morphism $\widetilde{p}\colon\widetilde{G}_1\to\widetilde{G}_2$
for which the diagram
\begin{equation}\label{03b_proof_eq0}
\begin{CD}
\widetilde{G}_1 @>{\widetilde{p}}>> \widetilde{G}_2 \\
@V{p_1}VV @VV{p_2}V \\
G_1 @>{p}>> G_2
\end{CD}
\end{equation}
is commutative,
hence $p_2\circ \widetilde{p}=p\circ p_1$.
Since $\Gamma_j=\Ker p_j$, it then follows that $\widetilde{p}(\Gamma_1)=\Gamma_2$.
Now the conclusion follows, using of Lemmas \ref{O3} and \ref{O3a}.

Specifically, since $p\colon G_1\to G_2$ is surjective and $\pi_2$ is irreducible,
it is easily checked that $\pi_2\circ p$ is an irreducible representation of~$G_1$.
In order to identify the coadjoint orbit of $G_1$ associated with $\pi_2\circ p$ via Lemma~\ref{O3a},
we first note that
$$\pi_2\circ p\circ p_1=(\pi_2\circ p_2)\circ\widetilde{p}$$
by \eqref{03b_proof_eq0}.
The above equality implies by Lemma~\ref{O3} that
if we denote by $\widetilde{\Oc}_2\subseteq\Lie(\widetilde{G}_2)^*=\Lie(G_2)^*$
the coadjoint orbit of $\widetilde{G}_2$ that is associated with $\pi_2\circ p_2$,
then
\begin{equation}\label{O3b_proof_eq1}
\widetilde{\Oc}_1:=\Lie(\widetilde{p})^*(\Oc_2)\subseteq\Lie(\widetilde{G}_1)^*
\end{equation}
is the coadjoint orbit of~$\widetilde{G}_1$ that is associated with $\pi_2\circ p\circ p_1$,
where we have used the notation $\Oc_2$ introduced in the statement.
On the other hand, for $j=1,2$, the group morphism
$p_j\colon\widetilde{G}_j\to G_j$ is a covering map,
hence $\Lie(p_j)\colon\Lie(\widetilde{G}_j)\to \Lie(G_j)$ is an isomorphism of Lie algebras.
By Lemma~\ref{O3a}, the coadjoint orbit of $G_2$ associated with $\pi_2$
is just the coadjoint orbit of $\widetilde{G}_2$ associated with $\pi_2\circ p_2$.
More precisely, using the vector space isomorphism $\Lie(p_2)^*\colon \Lie(G_2)^*\to \Lie(\widetilde{G}_2)^*$,
one has
\begin{equation}\label{O3b_proof_eq2}
\Lie(p_2)^*(\Oc_2)=\widetilde{\Oc}_2.
\end{equation}
Similarly, by Lemma~\ref{O3a} again, the coadjoint orbit of $G_1$ associated with $\pi_2\circ p$
is just the coadjoint orbit of $\widetilde{G}_1$ associated with $(\pi_2\circ p)\circ p_1$.
More precisely, using the vector space isomorphism $\Lie(p_1)^*\colon \Lie(G_1)^*\to \Lie(\widetilde{G}_1)^*$,
one has
\begin{equation}\label{O3b_proof_eq3}
\Lie(p_1)^*(\Oc_1)=\widetilde{\Oc}_1.
\end{equation}
Also, by \eqref{03b_proof_eq0}, one has $p\circ p_1=p_2\circ\widetilde{p}$,
hence $\Lie(p)\circ \Lie(p_1)=\Lie(p_2)\circ \Lie(\widetilde{p})$,
and then
$\Lie(p_1)^*\circ \Lie(p)^*=\Lie(\widetilde{p})^*\circ \Lie(p_2)^*$,
which further implies by \eqref{O3b_proof_eq2}, \eqref{O3b_proof_eq1}, and \eqref{O3b_proof_eq3},
$$\Lie(p_1)^*(\Lie(p)^*(\Oc_2))=\Lie(\widetilde{p})^*(\Lie(p_2)^*(\Oc_2))
=\Lie(\widetilde{p})^*(\widetilde{\Oc}_2)=\widetilde{\Oc}_1=\Lie(p_1)^*(\Oc_1).$$
Now, as  $\Lie(p_1)^*$ is a vector space isomorphism, we obtain $\Lie(p)^*(\Oc_2)=\Oc_1$, as claimed,
and this completes the proof.
\end{proof}

\end{document}